\newtheorem{thm}{Theorem}[section]
\newtheorem{corr}[thm]{Corollary}
\newtheorem{lem}[thm]{Lemma}
\newtheorem{prop}[thm]{Proposition}
\theoremstyle{definition}
\theoremstyle{remark}
\newtheorem{rem}[thm]{Remark}
\numberwithin{equation}{section}
\def\Z{\mathbb Z}
\theoremstyle{definition}
\newtheorem{definition}{Definition}[section]
 \theoremstyle{remark}
\begin{document}
\title{popular differences and generalized Sidon sets}
\author{Max Wenqiang Xu}
\thanks{M.W.Xu is supported by a London Mathematics Society Undergraduate Research Bursary and the Mathematical Institute at University of Oxford.}
\address{Department of Mathematics, University College London, Gower Street, London, WC1E 6BT, United Kingdom}
\email{wenqiang.xu@ucl.ac.uk}

%\date{\today}

\maketitle
%\tableofcontents

\begin{abstract}
For a subset $A \subseteq [N]$, we define the representation function $ r_{A-A}(d) := \#\{(a,a') \in A \times A : d = a - a'\}$  and define $M_D(A) := \max_{1 \leq d < D} r_{A-A}(d)$ for $D>1$. We study the smallest possible value of $M_D(A)$ as $A$ ranges over all possible subsets of $[N]$ with a given size. We give explicit asymptotic expressions with constant coefficients determined for a large range of $D$. We shall also see how this problem connects to a well-known problem about generalized Sidon sets.
\end{abstract}

\section{Introduction}

For a positive integer $N$, denote by $[N]$ the discrete interval $\{1,2,\cdots,N\}$. For a subset $A \subseteq [N]$, we study the difference set
\[ A - A := \{a-a' : a,a' \in A\}. \]
We introduce the representation function $r_{A-A}$, defined by
\[ r_{A-A}(d) := \# \{(a,a') \in A \times A : d = a - a'\}. \]
The representation function has been investigated a lot in the literature. See, for example, \cite{Erd}, \cite{Ben}.
In this paper, we are interested in the maximum size of $r_{A-A}(d)$, as $d$ ranges over all positive integers smaller than $D$ for different thresholds $D$. More precisely, let
\[ M_D(A) := \max_{1 \leq d < D} r_{A-A}(d) \]
and $f_D(N, \alpha)$ be the smallest possible value of $M_D(A)$, as $A$ ranges over all subsets $A \subseteq [N]$ with $|A| \geq \alpha N$, i.e.,
\[ f_D(N, \alpha):= \min_{ A \subseteq [N], |A| \ge \alpha N} M_D (A).  \]

\begin{thm}\label{main-thm}  
Let $2 \le D \leq N$ be positive integers and let $\alpha \in (0,1)$. Write $D = (1+\gamma)\alpha^{-1}$ for some $-1 < \gamma $, then we have the following:
\begin{enumerate}
\item If $\gamma \le 0$, then  $f_D(N, \alpha) = 0$.
\item If $0 < \gamma \le 1 - \delta$ for some positive number $\delta$, $N \gg_\delta \alpha^{-2}$ and $D$ is sufficiently large in terms of $\delta^{-1}$, then $f_D(N, \alpha) = (1+o(1))\frac{2\gamma}{(1+\gamma)^2}  \alpha^2 N $. 
\item If $\alpha^2 N$ $\rightarrow \infty$ , $\gamma \rightarrow \infty$ and $D \alpha \rightarrow \infty$ as $N \rightarrow \infty$, then $f_D(N, \alpha) = (1+o(1))\alpha^2 N$.
\end{enumerate}
Here $\gamma = \gamma(N), \alpha = \alpha (N), D = D(N)$. They are all regarded as functions of $N$.
\end{thm}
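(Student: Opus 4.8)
\medskip
\noindent\textbf{The unifying tool and the lower bounds.}
The plan is to prove matching upper and lower bounds in each regime, with all three lower bounds flowing from one sliding-window identity. For an integer $x$ set $B(x) := |A \cap [x,x+D)|$. Counting, for each pair $a>a'$ in $A$ with $a-a'=d<D$, the $D-d$ windows $[x,x+D)$ containing both endpoints, and noting $\sum_x B(x)=D|A|$ with at most $N+D-1$ nonempty windows, gives the identity
\[ \sum_{x \in \Z} \binom{B(x)}{2} = \sum_{d=1}^{D-1} (D-d)\, r_{A-A}(d) \le M_D(A)\binom{D}{2}, \]
so that $M_D(A) \ge \frac{2}{D(D-1)} \sum_{x} \binom{B(x)}{2}$. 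Every lower bound then reduces to estimating the left sum under $\sum_x B(x)=D|A|\ge(1+\gamma)N$. Part~(1) is immediate (I exhibit $M_D=0$ below). For part~(2) I would use the \emph{linear} estimate $\binom{b}{2}\ge b-1$ ($b\ge 1$), which yields $\sum_x\binom{B(x)}{2}\ge D|A|-(N+D-1)\ge \gamma N-D+1$ and hence $M_D(A)\ge(1+o(1))\tfrac{2\gamma}{(1+\gamma)^2}\alpha^2 N$, the errors being lower order once $D=o(\gamma N)$. For part~(3), where the mean window count $\approx 1+\gamma\to\infty$, the linear estimate is wasteful; instead I would apply Jensen to the convex function $\binom{\cdot}{2}$ to get $\sum_x\binom{B(x)}{2}\ge(1-o(1))\tfrac{\gamma(1+\gamma)}{2}N$, whence $M_D(A)\ge(1-o(1))\tfrac{\gamma}{1+\gamma}\alpha^2 N=(1-o(1))\alpha^2 N$. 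The crossover at $\gamma=1$ between the linear and quadratic estimates for $\binom{b}{2}$ is exactly the transition between the two formulas.

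\medskip
\noindent\textbf{The constructions.}
For part~(1) ($D\le\alpha^{-1}$) take the progression $\{1,1+D,1+2D,\dots\}\cap[N]$, of size $\ge N/D\ge\alpha N$ with no difference in $[1,D)$, so $M_D=0$. For part~(3) a uniformly random subset of density $\alpha$ (or a perfect difference set of suitable modulus) has $\mathbb E\,r_{A-A}(d)=\alpha^2(N-d)$, essentially constant on $[1,D)$; since $\alpha^2 N\to\infty$, Chernoff plus a union bound over the fewer than $N$ differences give $M_D(A)\le(1+o(1))\alpha^2 N$ whp. The subtle case is part~(2), where I must build a set whose localized difference function is \emph{both flat and below the random value} on $[1,D)$, dumping the surplus representations onto differences $\ge D$. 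My candidate is a randomized ``companion'' construction: start from the spread-out base $P=\{s,s+D,s+2D,\dots\}\cap[N]$ of part~(1) with a random shift $s$, and to each base point independently, with probability $\gamma$, adjoin one companion at offset $u$ uniform in $\{1,\dots,D-1\}$. Then $\mathbb E|A|=(1+\gamma)|P|=\alpha N$, and the close pairs split into two uniformly distributed families --- a base point with its own companion (distance $u$) and a companion with the next base point (distance $D-u$) --- each contributing $(1+o(1))\tfrac{\gamma}{(1+\gamma)^2}\alpha^2 N$ to every $r_{A-A}(d)$, for the total $(1+o(1))\tfrac{2\gamma}{(1+\gamma)^2}\alpha^2 N$ matching the lower bound.

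\medskip
\noindent\textbf{The main obstacle.}
The hard part is the part~(2) construction, and I expect to spend most of the effort there. Two issues arise. First, the expectation computation for $r_{A-A}(d)$ must be upgraded to a uniform-in-$d$ concentration statement, so that $M_D(A)=\max_{d<D}r_{A-A}(d)$ --- not merely a typical value --- stays within $(1+o(1))$ of the target; I anticipate a second-moment or bounded-differences argument with a union bound over $d<D$, which is where $N\gg_\delta\alpha^{-2}$ and $D$ large are used. Second, and more seriously, windows capturing a base point together with \emph{two} companions (its own and the preceding one) create triple coincidences contributing at order $\gamma^2$; these are not negligible when $\gamma$ is of constant order, and unchecked they would push $r_{A-A}(d)$ past $\tfrac{2\gamma}{(1+\gamma)^2}\alpha^2 N$. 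Controlling them is precisely why part~(2) demands the margin $\gamma\le 1-\delta$ with $D$ large in terms of $\delta^{-1}$: the barrier at $\gamma=1$ is the generalized-Sidon threshold, where the formula saturates at $\tfrac12\alpha^2 N$, the largest flat localized difference function a density-$\alpha$ set can sustain. I would make this $O(\gamma^2)$ correction rigorous by thinning companions or forbidding nearby double companions, keeping the difference function flat while preserving the density.
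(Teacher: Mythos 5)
Your lower-bound machinery is correct and is a genuinely different route from the paper's. The paper proves the part~(2) lower bound by inclusion--exclusion on the translates $A, A+1,\dots,A+D-1$ (its Proposition 2.3) and the part~(3) lower bound by a separate Fourier/Plancherel computation in $\mathbb{Z}/(N+D)\mathbb{Z}$ (its Theorem 3.1). Your single window identity $\sum_x \binom{B(x)}{2}=\sum_{d=1}^{D-1}(D-d)r_{A-A}(d)$, with the linear estimate $\binom{b}{2}\ge b-1$ in one regime and Jensen in the other, recovers both bounds elementarily (indeed the inclusion--exclusion step is your linear estimate in disguise, since the number of translates $A+d$ containing $y$ is a window count). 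One caveat: your Jensen computation takes the number of nonempty windows to be $(1+o(1))N$, which requires $D=o(N)$; when $D\asymp N$ it yields only $\frac{\alpha^2N^2}{N+D}\approx\alpha^2N/2$. The paper patches exactly this via monotonicity of $f_D$ in $D$ (apply the bound with a shorter window $D'=o(N)$, $D'\alpha\to\infty$), and you would need the same remark.

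The upper bounds are where the theorem's real content lies, and both of your constructions have genuine gaps. For part~(3), ``Chernoff plus a union bound over fewer than $N$ differences'' needs the per-difference failure probability $\exp(-c\epsilon^2\alpha^2N)$ to beat $1/N$, i.e.\ $\alpha^2N\gg\log N$; the theorem assumes only $\alpha^2N\to\infty$. In the regime $\alpha^2N=o(\log N)$ this is not a reparable proof artifact: the maximum over $\sim N$ essentially-Poisson$(\alpha^2N)$ counts is of order $\log N/\log\log N\gg\alpha^2N$, so the random set genuinely fails. The paper flags precisely this shortcoming of the probabilistic method and devotes its Section 4 to a deterministic substitute: unions of $k$ parabolas $A_u=\{(x,x^2/u)\}$ in $\mathbb{Z}/p\mathbb{Z}\times\mathbb{Z}/p\mathbb{Z}$, with Weil's theorem selecting a window of $u$'s so that $r_{A-A}\le k^2+7k^{7/4}$, followed by projection to $\mathbb{Z}/p^2s\mathbb{Z}$ and then to $[N]$. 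Your fallback ``or a perfect difference set of suitable modulus'' presupposes objects not known to exist for general parameters; building an approximate one is exactly the hard part you are skipping.

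For part~(2) you correctly diagnose the $\gamma^2$-order obstruction (adjacent companions with decreasing offsets create extra pairs at distances near $D$, inflating $r_{A-A}(d)$ there by $\asymp\frac{\gamma^2}{(1+\gamma)^2}\alpha^2N$), but the proposed repairs do not close it. Thinning the offending companions removes $\asymp\gamma^2N/D$ elements and violates $|A|\ge\alpha N$; compensating by starting with companion probability $\gamma'$ and deleting one companion from each decreasing adjacent pair gives surviving density $\gamma'(1-\gamma'/2)\le 1/2$, so this patch cannot reach any $\gamma\in(1/2,1-\delta]$, and allowing several companions per base point to push past $1/2$ creates new within-block close pairs of the same $\gamma^2$ order. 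Forbidding decreasing adjacent offsets while keeping each offset uniform forces correlated, essentially monotone offsets --- which is the paper's deterministic construction: offsets $1,2,\dots,D-1$ taken in increasing order inside blocks of length $D^2$, repeated $k$ times (the set $B$ in its Proposition 2.4), where the block design also makes the unavoidable wrap-around pair a base--companion pair at distance $1$, handled separately as $r_{A-A}(1)=2k-1$. Note finally that under the hypothesis $N\gg_\delta\alpha^{-2}$ the target count $\asymp\gamma\alpha^2N$ may be bounded, in which case no randomized construction can concentrate uniformly over the $D-1$ differences at all, whereas the deterministic count $\le 2k$ is exact. So your proposal proves the lower bounds but not the upper bounds of parts (2) and (3).
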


\begin{rem}
$1)$ When $\alpha \ge 1/2$, the above conclusions can be easily checked. So we focus on $\alpha \in (0,1/2)$ from now on.\\
$2)$ A more precise statement of Theorem \ref{main-thm} $(2)$ is Theorem \ref{main-thm-1}.  \\
\end{rem}
We ask the following question related to (2) of Theorem~\ref{main-thm}.
\subsection*{Question:}
Suppose that there is a set $A$ such that $$M_D(A) = (1+o(1))\frac{2\gamma}{(1+\gamma)^2}  \alpha^2 N  ,$$ i.e., it takes the extremal value. Then, what can you say about the structure of $A$?

This is a vague inverse problem and one purpose is to investigate if the construction we give later in the Proposition~\ref{upper thm} is essentially unique or not. We believe the set we constructed is unique up to some permutations.

\subsection{Connections with Sidon sets}

We shall see that there are some connections between problems about Sidon sets and the problem we are investigating in the case $D=N$.

\begin{definition}(Sidon sets and $g$-Sidon sets)
A set of natural numbers $S$ is called a Sidon set if the equation $a+b=c+d$ has only the trivial solution $\{a,b\}=\{c,d\}$, where $a,b,c,d$ are elements of the set $S$. A set $A$ is called $g$ -Sidon set if for any integer $x$ we have:
\[r_{A+A}(x):=\# \{(a,b) \in A \times A : a+b =x\} \le g. \]
\end{definition}
Cilleruelo, Ruzsa and Vinuesa (2010) \cite{sidon} proved the following result.

\begin{thm}\label{CRV}
Define $\beta_g(N):= \max \{|A|\}$ where $ A\subseteq [N] $ and $A$ is a $g$-Sidon set.
Then,
\[ \sigma_1(g) \sqrt{gN} (1- o(1)) \le \beta_g(N) \le \sigma_2(g) \sqrt{gN} (1 +o (1)). \]
Moreover, 
\[ \lim_{g\to\infty} \sigma_1(g) = \lim_{g\to\infty} \sigma_2(g) = \sigma, ~for~some~constant~\sigma. \]
\end{thm}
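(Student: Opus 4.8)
The plan is to reduce both inequalities to a single continuous extremal problem and then to quantify how faithfully the discrete problem approximates it. Rescaling $[N]$ to $[0,1]$ and modelling a $g$-Sidon set $A$ by a density $\phi\colon[0,1]\to[0,1]$ (so that $|A|\approx N\int_0^1\phi$), the representation function corresponds to an autoconvolution: $r_{A+A}(x)\approx N(\phi*\phi)(x/N)$. Hence the constraint $r_{A+A}\le g$ becomes $\lVert\phi*\phi\rVert_\infty\le g/N$, and writing $\phi=\sqrt{g/N}\,\psi$ turns the question into maximizing $\int_0^1\psi$ over $\psi\ge0$ supported on an interval of length $1$ with $\lVert\psi*\psi\rVert_\infty\le1$. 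I would therefore set
\[ \sigma:=\sup\Big\{\textstyle\int\psi \;:\; \psi\ge0,\ \operatorname{supp}\psi\subseteq[0,1],\ \lVert\psi*\psi\rVert_\infty\le1\Big\}, \]
and note at once that $\int_0^2(\psi*\psi)=(\int\psi)^2$ forces $\sigma\le\sqrt2$, while $\psi=\mathbf{1}_{[0,1]}$ gives $\sigma\ge1$, so the supremum is a finite positive constant. The constants $\sigma_1(g),\sigma_2(g)$ should be the values of the corresponding \emph{discrete} optimization problems at level $g$, which I expect to sandwich $\sigma$.

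For the upper bound I would pass from $A$ to a genuine feasible function for this problem. The first step is to smooth $\mathbf{1}_A$ over short windows: for a window length $\ell=o(N)$ tending slowly to infinity, the local averages of $\mathbf{1}_A$ define a density whose autoconvolution is controlled by $\max_x r_{A+A}(x)\le g$, since the number of pairs $(a,b)$ with $a+b$ in any length-$\ell$ window is at most $g\ell$. Tracking the normalization then yields $|A|\le \sigma_2(g)\sqrt{gN}\,(1+o(1))$, where $\sigma_2(g)$ is the optimum of the windowed discrete problem; the point is that the self-convolution structure, rather than the trivial count $|A|^2\le g(2N-1)$, is what produces the correct constant below $\sqrt2$.

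For the lower bound I would start from a near-optimal $\psi$ for the continuous problem, scale it to $\phi=\sqrt{g/N}\,\psi$, and build $A$ by a randomized construction including each $n\in[N]$ independently with probability $\phi(n/N)$. Then $\mathbb{E}|A|\approx\sqrt{gN}\int\psi$ and $\mathbb{E}\,r_{A+A}(x)\approx N(\phi*\phi)(x/N)\le g$. The substance is concentration: I would bound the upper tail of each $r_{A+A}(x)$ (these are sums of weakly dependent indicators, amenable to a second-moment or Chernoff/Azuma estimate), show that with positive probability all but $o(|A|)$ values lie below $(1+o(1))g$, and then delete one element from every remaining bad pair. After rescaling $g$ by $1+o(1)$ to absorb the slack, this produces a genuine $g$-Sidon set of size $\sigma_1(g)\sqrt{gN}\,(1-o(1))$, where $\sigma_1(g)$ records the loss incurred by concentration and deletion.

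Finally I would prove $\sigma_1(g),\sigma_2(g)\to\sigma$ by showing that both discrete problems converge to the continuous one as $g\to\infty$: the fluctuations of the representation counts are of order $\sqrt{g}=o(g)$, so the relative deletion loss in the construction and the relative slack in the windowed upper bound both vanish, while a near-optimal continuous $\psi$ can be discretized at level $g$ with integral loss $o(1)$. I expect the main obstacle to be exactly this transference at the level of constants: the crude count gives only $\sqrt2$, so extracting the true $\sigma$ forces one to analyze the autoconvolution extremal problem (whose exact value is itself unknown) and to control the $o(1)$ errors uniformly enough that the discrete optima provably squeeze down to $\sigma$. The probabilistic deletion is technically the most delicate step, since for small $g$ the Poisson-scale fluctuations are comparable to $g$ itself, which is precisely why the clean constant only emerges in the limit $g\to\infty$.
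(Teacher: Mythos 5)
First, a point of reference: the paper does not prove this statement at all --- it is quoted verbatim from Cilleruelo--Ruzsa--Vinuesa \cite{sidon}. So the comparison below is with the CRV argument (whose lower-bound machinery is in fact mirrored, for difference sets, in Sections 3--4 of this paper). Your setup of the continuous extremal problem, the bounds $1\le\sigma\le\sqrt2$, and your windowed-density argument for the upper bound are all sound and essentially coincide with how CRV prove $\beta_g(N)\le\sigma_2(g)\sqrt{gN}(1+o(1))$: smoothing $\mathbf{1}_A$ at a scale $\ell\to\infty$, $\ell=o(N)$, produces a feasible function for the autoconvolution problem with only a $(1+O(1/\ell))$ loss, because $r_{A+A}\le g$ bounds the triangularly-weighted pair counts exactly.

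The genuine gap is in your lower bound. The theorem is a statement for \emph{fixed} $g$ with $o(1)$ meaning $N\to\infty$, and only afterwards $g\to\infty$ in the constants; the random-selection-plus-deletion argument fails in that regime, for every fixed $g$ no matter how large. Concretely, with inclusion probabilities $\phi(n/N)=\sqrt{g/N}\,\psi(n/N)$, each $r_{A+A}(x)$ is approximately Poisson with mean at most $g$, so
\[
\Pr\bigl[r_{A+A}(x)>(1+\epsilon)g\bigr]\;\asymp\;e^{-c\epsilon^2 g},
\]
a constant independent of $N$. The expected number of bad sums $x$ is therefore $\asymp N e^{-c\epsilon^2 g}$, linear in $N$, and the deletions needed to repair them are $o\bigl(|A|\bigr)=o(\sqrt{gN})$ only when $g\gg\log N$. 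Equivalently, if you try to avoid deletion by a union bound over the $\sim 2N$ values of $x$, you must allow deviations of size $\sqrt{g\log N}$, which dwarfs $g$ when $g$ is fixed and $N\to\infty$. Your claim that ``the fluctuations of the representation counts are of order $\sqrt g=o(g)$'' is true per sum but not uniformly over all sums, and your closing caveat about ``small $g$'' does not repair this: the limits do not commute, and the theorem requires the $N$-limit first. This is exactly why CRV (and this paper, in its Section 4 analogue for $r_{A-A}$) abandon the probabilistic method for the lower bound and use algebraic constructions: unions of parabolas $A_u=\{(x,x^2/u)\}$ in $\mathbb{Z}/p\mathbb{Z}\times\mathbb{Z}/p\mathbb{Z}$, where Legendre-symbol cancellation (Weil's theorem) gives the \emph{deterministic} representation bound $k^2+O(k^{7/4})$ --- an error $o(k^2)$ with no $\log N$ loss --- followed by projection to $\mathbb{Z}/p^2s\mathbb{Z}$ and unwrapping to $[N]$. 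Replacing your third paragraph by such a construction (modulated by a discretization of a near-extremal continuous $\psi$, which is how CRV make $\sigma_1(g)\to\sigma$ rather than just $\sigma_1(g)\ge 1$) is what the proof actually requires; the paper's own remark in Section 4 about the probabilistic method forcing $\alpha\gg N^{-1/2}(\log N)^{-1/2}$ is precisely this obstruction.
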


We can translate our result in Theorem \ref{main-thm} into their language.

\begin{corr}
Define $\alpha_g(N):= \max \{|A|\}$ where $ A\subseteq [N] $, $r_{A-A}(x) \le g$ for all non-zero $x$.
Then,
\[ \sigma_1(g) \sqrt{gN} (1- o(1)) \le \alpha_g(N) \le \sigma_2(g) \sqrt{gN} (1 +o (1)). \]
Moreover, 
\[ \lim_{g\to\infty} \sigma_1(g) = \lim_{g\to\infty} \sigma_2(g) = 1. \]
\end{corr}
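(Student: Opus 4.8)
The plan is to read the corollary as the \emph{inverse} form of Theorem~\ref{main-thm} in the special case $D=N$, and to take $\sigma_1(g),\sigma_2(g)$ to be the extremal normalized densities of the difference problem. First I would record the dictionary between the two formulations. For $A\subseteq[N]$ every nonzero difference lies in $\{\pm1,\dots,\pm(N-1)\}$ and $r_{A-A}(d)=r_{A-A}(-d)$, so $M_N(A)=\max_{1\le d<N}r_{A-A}(d)=\max_{d\ne0}r_{A-A}(d)$. Hence the condition ``$r_{A-A}(x)\le g$ for all nonzero $x$'' is exactly $M_N(A)\le g$, and $\alpha_g(N)=\max\{|A|:M_N(A)\le g\}$. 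The quantities $f_N(N,\alpha)$ and $\alpha_g(N)$ are therefore monotone inverses of one another: writing $|A|=\alpha N$, one has the equivalence $f_N(N,\alpha)\le g\iff\alpha_g(N)\ge\alpha N$, since both sides assert the existence of a set of size at least $\alpha N$ all of whose nonzero difference multiplicities are at most $g$.

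With this dictionary I would define
\[ \sigma_1(g):=\liminf_{N\to\infty}\frac{\alpha_g(N)}{\sqrt{gN}}, \qquad \sigma_2(g):=\limsup_{N\to\infty}\frac{\alpha_g(N)}{\sqrt{gN}}, \]
so that the two-sided estimate in the statement holds by the very definition of $\liminf$ and $\limsup$, the factors $1\pm o(1)$ absorbing the approach to these values. That these are finite and positive is elementary: summing over nonzero differences gives $|A|^2-|A|=\sum_{d\ne0}r_{A-A}(d)\le 2(N-1)g$, so $\sigma_2(g)\le\sqrt2$, while a perfect difference (Singer) set shows $\alpha_g(N)\ge\alpha_1(N)\gg\sqrt N$, so $\sigma_1(g)>0$; this parallels the role played by Theorem~\ref{CRV} in the additive setting.

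The substance is the claim $\lim_{g\to\infty}\sigma_1(g)=\lim_{g\to\infty}\sigma_2(g)=1$, and here item~(3) of Theorem~\ref{main-thm} (with $D=N$, where $1+\gamma=D\alpha=|A|$ and $\alpha^2N=|A|^2/N$) is the engine. The point to watch is that for a \emph{fixed} $g$ the relevant quantity $\alpha^2N\approx g$ stays bounded, so item~(3) does not apply; it applies only along a \emph{joint} sequence in which $g\to\infty$. I would therefore argue by contradiction along such a sequence. For the upper bound, suppose $\limsup_{g}\sigma_2(g)\ge1+3\eta$; pick $g_k\to\infty$ with $\sigma_2(g_k)\ge1+2\eta$, and then $N_k\to\infty$ with $N_k\gg g_k$ and extremal sets $A_k\subseteq[N_k]$ satisfying $M_{N_k}(A_k)\le g_k$ and $|A_k|\ge(1+\eta)\sqrt{g_kN_k}$. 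Put $\alpha_k=|A_k|/N_k$; then $\alpha_k^2N_k\ge(1+\eta)^2g_k\to\infty$, $\gamma_k=|A_k|-1\to\infty$ and $N_k\alpha_k=|A_k|\to\infty$, so item~(3) gives $f_{N_k}(N_k,\alpha_k)=(1+o(1))\alpha_k^2N_k\ge(1+o(1))(1+\eta)^2g_k$. But $A_k$ is admissible in the minimum defining $f$, whence $f_{N_k}(N_k,\alpha_k)\le M_{N_k}(A_k)\le g_k$, forcing $(1+o(1))(1+\eta)^2\le1$, a contradiction. Hence $\limsup_g\sigma_2(g)\le1$.

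The matching lower bound is the same argument run in reverse: assuming $\liminf_g\sigma_1(g)\le1-3\eta$, choose $g_k\to\infty$ and $N_k\to\infty$ with $\alpha_{g_k}(N_k)\le(1-\eta)\sqrt{g_kN_k}$, and test the density $\alpha_k$ defined by $\alpha_k^2N_k=(1-\eta/2)g_k$. Along this sequence all three hypotheses of item~(3) hold, so $f_{N_k}(N_k,\alpha_k)=(1+o(1))(1-\eta/2)g_k<g_k$ for large $k$; by the dictionary there is an admissible $A_k$ with $|A_k|\ge\alpha_kN_k=\sqrt{(1-\eta/2)g_kN_k}>(1-\eta)\sqrt{g_kN_k}$, contradicting the choice of $N_k$. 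Combining the two bounds with $\sigma_1\le\sigma_2$ yields $\lim_{g\to\infty}\sigma_1(g)=\lim_{g\to\infty}\sigma_2(g)=1$. The one genuine obstacle is the interchange of the limits in $g$ and $N$: because item~(3) is asymptotic in $N$ under hypotheses that fail for fixed $g$, the whole argument must be organized as a single diagonal limit $g,N\to\infty$ with $N\gg g$, and one must check that the near-extremal sets furnished by the $\liminf/\limsup$ can be threaded into sequences satisfying $\alpha^2N\to\infty$, $\gamma\to\infty$ and $D\alpha\to\infty$ simultaneously.
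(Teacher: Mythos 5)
Your proposal is correct and takes essentially the paper's (implicit) route: the paper offers the corollary as a direct translation of Theorem \ref{main-thm} $(3)$ (i.e.\ Theorems \ref{Lingbo 1} and \ref{Lingbo 2}, $f_N(N,\alpha)=(1+o(1))\alpha^2 N$) into the $g$-Sidon language, and your dictionary $M_N(A)\le g \iff$ admissibility, the $\liminf/\limsup$ definitions of $\sigma_1,\sigma_2$, and the two diagonal contradiction arguments are exactly that inversion carried out in detail. The only point the paper leaves implicit that you rightly flag and handle is the joint limit $g,N\to\infty$ needed to invoke part $(3)$, whose hypotheses fail for fixed $g$.
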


\subsection{Plan for the paper }
We first prove Theorem \ref{main-thm} $(1)$ in Section $2$ by constructing an explicit example. Theorem \ref{main-thm} $(2)$ is proved in Section $2$ by using the principle of inclusion and exclusion to prove a lower bound for $f_D(N, \alpha)$ and constructing an explicit example to attain the upper bound. To prove  the lower bound in Theorem \ref{main-thm} $(3)$ in Section $3$, we use a deterministic method by first proving results in the case of cyclic groups and then applying the results to the case of integers. \\

\subsection*{Acknowledgement}
The author would like to express his sincere gratitude to Andrew Granville and Fernando Xuancheng Shao, for carefully reading early drafts of the paper
and providing many useful comments, and also to Oleksiy Klurman who inspired the author to study this problem. Thanks to the anonymous referee for many helpful comments.

\section{Proof of Theorem \ref{main-thm} $(1)$ and $(2)$: the case $D$ is small}

We first prove Theorem \ref{main-thm} $(1)$: $f_D(N, \alpha) = 0$ when $D \le \alpha^{-1}$. 
\begin{proof}[Proof of Theorem \ref{main-thm-1} $(1)$]
We prove this fact by constructing a set $A$ with $|A| \ge \alpha N$ and $M_D(A)=0$.
Let $a:= \lfloor \alpha^{-1}\rfloor $ and $l$ is chosen to satisfy $la \le N-1 < (l+1)a$.
\[ A:=\{1, a+1, 2a+1, 3a+1, \cdots, la+1\}. \]

The density of $A$ is,

\[ \frac{|A|}{N} = \frac{l+1}{N} \ge \frac{1}{a} \ge \alpha.\]
Since $D \le \alpha^{-1}$, $D \le a$ also holds.
Since all non-zero positive differences here are at least $a$, we get $M_D(A) = 0$. This gives us $f_D(N,\alpha) = 0$.
\end{proof}

Now we prove the following theorem, which gives the result for the case when $D$ is relatively small.
\begin{thm}\label{main-thm-1}
Suppose that $D,N$ are positive integers and $\alpha \in (0,1).$ Let $D, \alpha, N$ satisfy the following relations.
\begin{enumerate}

\item $\alpha^{-1} < D \le (2-\delta) \alpha^{-1}$, for some positive $\delta < 1$ such that $D \delta >1$.
\item $N \ge 2D^3 /(D \delta -1).$
\end{enumerate}
Then

\[f_{D}(N,\alpha) = \frac{2(D\alpha - 1)}{D(D-1)}N+O(1).\]

\end{thm}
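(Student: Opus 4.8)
The plan is to prove matching lower and upper bounds, each with additive error $O(1)$. The engine on both sides is the count of elements of $A$ in sliding windows of length $D$: for an integer $i$ set $w_i := |A \cap [i, i+D-1]|$, and let $i$ range over all integers with $[i,i+D-1]\cap[1,N]\neq\emptyset$. Two exact identities drive everything. Each element of $A$ lies in exactly $D$ of these windows, so $\sum_i w_i = D|A|$; and each pair of elements at distance $d$ lies in exactly $D-d$ windows, so $\sum_i \binom{w_i}{2} = \sum_{d=1}^{D-1}(D-d)\,r_{A-A}(d)$. The total number of windows is $N+D-1$.

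For the lower bound I will combine these with the elementary integer inequality $\binom{w}{2}\ge w-1$, valid for every integer $w\ge 0$ (with equality exactly when $w\in\{1,2\}$). Summing gives $\sum_i\binom{w_i}{2}\ge \sum_i (w_i-1) = D|A|-(N+D-1)$. On the other hand, since $r_{A-A}(d)\le M_D(A)$ for each $d$ and $\sum_{d=1}^{D-1}(D-d)=\binom{D}{2}$, the second identity yields $\sum_i\binom{w_i}{2}\le M_D(A)\binom{D}{2}$. Using $|A|\ge\alpha N$ and rearranging,
\[ M_D(A)\ \ge\ \frac{D\alpha N-(N+D-1)}{\binom{D}{2}}\ =\ \frac{2(D\alpha-1)}{D(D-1)}\,N-O(1). \]
This holds for every admissible $A$ and uses none of the hypotheses on $\delta$ or $N$; hence $f_D(N,\alpha)$ is at least the claimed main term up to $O(1)$. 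The slack is instructive: equality forces $w_i\in\{1,2\}$ for all $i$ and $r_{A-A}(d)=M_D(A)$ for all $1\le d<D$, and this dictates the extremal shape used below.

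For the upper bound I will exhibit a set achieving the bound, guided by the equality analysis. The optimal $A$ should have at most two points in every window of length $D$ — equivalently $a_{i+2}-a_i\ge D$ for the sorted elements $a_1<a_2<\cdots$ — so that the only pairs at distance $<D$ are consecutive ones and $r_{A-A}(d)$ merely counts consecutive gaps $g_i:=a_{i+1}-a_i$ equal to $d$. The target is therefore a set of $\lceil\alpha N\rceil$ points whose gaps are of two types: \emph{small} gaps taking each value in $\{1,\dots,D-1\}$ equally often (making $r_{A-A}(d)$ nearly constant in $d$) and \emph{large} gaps of size $\ge D$ used to pad out $[1,N]$ and to separate small gaps. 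A bookkeeping of totals shows one needs about $\tfrac{2(D\alpha-1)}{D}N$ small gaps and about $\tfrac{2-D\alpha}{D}N$ large gaps; the hypothesis $D\le(2-\delta)\alpha^{-1}$, i.e. $D\alpha\le 2-\delta$, is exactly what makes both counts nonnegative and leaves room to arrange the small gaps so that no two adjacent ones violate $g_i+g_{i+1}\ge D$ (pairing a gap $s$ with one of size $\ge D-s$). Realizing this periodically with period $O(D^2)$ and invoking $N\ge 2D^3/(D\delta-1)$ keeps all rounding and two-endpoint discrepancies at $O(1)$ in each $r_{A-A}(d)$, giving $M_D(A)\le \tfrac{2(D\alpha-1)}{D(D-1)}N+O(1)$.

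I expect the construction — not the lower bound — to be the crux. The delicate points are: (i) equidistributing the small-gap values over $\{1,\dots,D-1\}$ while simultaneously respecting the two-per-window constraint $g_i+g_{i+1}\ge D$, which tightens as $D\alpha\to 2$ and forces careful pairing once small gaps outnumber large ones (the regime $4/3<D\alpha<2$); and (ii) controlling the accumulated effect of rounding the gap-value frequencies and of the two ends of $[1,N]$ so the error remains $O(1)$ rather than $o(\text{main term})$, which is precisely where the quantitative hypothesis $N\ge 2D^3/(D\delta-1)$ is consumed. Verifying that the resulting density is at least $\alpha$ and that every difference $1\le d<D$ is hit the same number of times up to $O(1)$ is the main computation, and combining it with the lower bound closes the proof.
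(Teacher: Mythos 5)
Your lower bound is complete and correct, and it is the paper's own argument in dual form: counting elements of $A$ in the $N+D-1$ sliding windows $[i,i+D-1]$ is the same as counting multiplicities of points in the translates $A+d$, $0\le d<D$, and your pointwise inequality $\binom{w}{2}\ge w-1$ is exactly the Bonferroni inequality
\[ \Big|\bigcup_{0\le d<D}(A+d)\Big| \ \ge\ \sum_{0\le d<D}|A+d| \ -\ \sum_{0\le d<d'<D}|(A+d)\cap(A+d')| \]
that the paper applies, using $|(A+d)\cap(A+d')|=r_{A-A}(d'-d)$. Both routes give $M_D(A)\ge \frac{2(D\alpha-1)}{D(D-1)}N-\frac{2}{D}$ with no hypotheses on $\delta$ or $N$, so this half is fine.

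The genuine gap is the upper bound: you never define a set, and you yourself flag the construction as the crux. Your sketch correctly identifies the extremal shape --- small gaps equidistributed over $\{1,\dots,D-1\}$, large gaps of size $\ge D$, adjacent gaps summing to at least $D$, periodicity $O(D^2)$ --- and this is precisely what the paper builds: the block $B=\{D,2D,\dots,D^2\}\cup\{jD+(j+1):0\le j\le D-2\}$ of period $D^2$, repeated $k=\lceil(\alpha N-\lfloor N/D\rfloor)/(D-1)\rceil$ times to form $A_1$, padded with multiples of $D$ to form $A_2$. But everything you defer as ``the main computation'' is exactly what remains unproved. In the paper this requires: a case analysis on elements $x+iD^2$, $x'+i'D^2$ showing that $|i-i'|\ge 2$ forces a difference $>D$, that $|i-i'|=1$ forces $\{x,x'\}=\{D^2,1\}$ and $d=1$, and that for $i=i'$ each difference $2\le d<D$ arises from at most two pairs in $B$, so $r_{A-A}(d)\le 2k$ for all $1\le d<D$ (with $d=1$ getting $k$ interior pairs plus $k-1$ boundary pairs, i.e.\ $2k-1$); the density check $|A|=kD+M-k\ge\alpha N$; and the bound $2k\le\frac{2(D\alpha-1)}{D(D-1)}N+4$. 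Moreover the hypothesis $N\ge 2D^3/(D\delta-1)$ is consumed not, as you suggest, in controlling rounding of gap-value frequencies, but in verifying $kD\le\lfloor N/D\rfloor$, i.e.\ that the $k$ blocks and the run of multiples of $D$ actually fit inside $[N]$ simultaneously (this is where $D\delta>1$ enters, making $D(2-D\alpha)-1>0$). Without an explicit set and these verifications the theorem is not proved; with them, your outline becomes the paper's proof.
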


Notice that this is just a more accurate statement of Theorem \ref{main-thm} $(2)$.

\subsection{Lower bound for $f_D(N,\alpha)$}
In this section we are going to use the basic technique, the principle of inclusion and exclusion, to find the lower bound. 
\begin{prop}\label{low thm}
For any $2 \leq D \leq N$ and any $\alpha \in (0,1)$, we have
  \[ f_{D}(N,\alpha) \geq  \frac{2(D\alpha - 1)}{D(D-1)} N - \frac{2}{D}.\]

\end{prop}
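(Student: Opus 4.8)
The plan is to reduce the claim to a single application of the second Bonferroni inequality. Since $r_{A-A}(d)\le M_D(A)$ for every $1\le d\le D-1$, it suffices to prove the weighted lower estimate
\[ \sum_{d=1}^{D-1}(D-d)\,r_{A-A}(d)\ \ge\ (D\alpha-1)N-(D-1), \]
because $\sum_{d=1}^{D-1}(D-d)=\binom{D}{2}=\tfrac{D(D-1)}{2}$, so dividing this quantity by $\binom{D}{2}$ produces exactly $\tfrac{2(D\alpha-1)}{D(D-1)}N-\tfrac{2}{D}$. Thus the whole problem is to bound the weighted difference sum from below, and the natural device for this is inclusion–exclusion applied to translates of $A$.

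First I would introduce the $D$ translates $A_j:=A+j$ for $j=0,1,\dots,D-1$. Each $A_j$ lies in $[1,N+D-1]$, so $\bigl|\bigcup_{j=0}^{D-1}A_j\bigr|\le N+D-1$; this is the only place the ambient interval $[N]$ enters. On the other hand, the truncated inclusion–exclusion (Bonferroni) inequality gives $\bigl|\bigcup_j A_j\bigr|\ge \sum_{j}|A_j|-\sum_{0\le j<j'\le D-1}|A_j\cap A_{j'}|$. Here $|A_j|=|A|\ge\alpha N$, while $|A_j\cap A_{j'}|=\#\{(a,a')\in A\times A: a-a'=j'-j\}=r_{A-A}(j'-j)$. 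Grouping the pairwise intersections according to the value $d=j'-j$ (each $1\le d\le D-1$ occurring for exactly $D-d$ pairs $(j,j')$) turns the double sum into $\sum_{d=1}^{D-1}(D-d)r_{A-A}(d)$. Rearranging $N+D-1\ge D|A|-\sum_{d=1}^{D-1}(D-d)r_{A-A}(d)$ then yields precisely the displayed estimate, and one finishes via the trivial bound $\sum_{d=1}^{D-1}(D-d)r_{A-A}(d)\le M_D(A)\binom{D}{2}$ followed by minimising over $A$.

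The only genuinely delicate point is obtaining the \emph{sharp} constant, which is exactly why I would insist on the exact Bonferroni step rather than a softer averaging argument. A tempting alternative is to write $\sum_{d}(D-d)r_{A-A}(d)=\sum_t \binom{n_t}{2}$, where $n_t$ counts the elements of $A$ in the length-$D$ window starting at $t$, and then apply convexity; but the $N+D-1$ relevant windows near the two ends of $[N]$ are only partially filled, so the uniform lower bound coming from Jensen's inequality falls short of the target by a factor of roughly $D\alpha/2<1$, which is fatal in the regime $D\alpha<2$ that Theorem \ref{main-thm-1} lives in. The Bonferroni bound sidesteps this boundary dilution because it compares against the true cardinality of the union rather than against a uniform average, and the harmless over-count of the union by $D-1$ is precisely what produces the $-\tfrac{2}{D}$ error term. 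Everything else is bookkeeping, so I expect no further obstacle.
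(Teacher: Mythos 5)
Your proposal is correct and follows essentially the same route as the paper: translates $A+j$ for $0\le j<D$, the truncated inclusion--exclusion (Bonferroni) bound against the union's size $N+D-1$, and identification of $|A_j\cap A_{j'}|$ with $r_{A-A}(j'-j)$. The only cosmetic difference is that you group the pairwise intersections by the difference $d=j'-j$ and bound each $r_{A-A}(d)$ by $M_D(A)$, whereas the paper applies the pigeonhole directly to the $\binom{D}{2}$ summands; these are the same averaging step.
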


\begin{proof}
Let $A \subset [N]$ be any subset with $|A| \geq \alpha N$. We need to show that
\[ M_D(A) \geq \frac{2(D\alpha - 1)}{D(D-1)} N - \frac{2}{D}. \]
For any $d \in \Z$, define the translate $A+d$ to be the set
\[ A+d := \{a+d : a \in A\}. \]
 By the principle of inclusion and exclusion, we estimate the size of the union of $A+d$ as $d$ ranges over $0 \leq d < D$ as following:\\
\begin{equation}\label{inclusion principle}
\left|\bigcup_{0 \leq d < D} (A+d) \right| \geq \sum_{0 \leq d < D}|A+d|-\sum_{0 \leq d < d' < D}|(A+d) \cap (A+d')|.
\end{equation}
Since $A+d \subseteq [N+D-1]$ for $0 \leq d < D$, the left hand side above is at most $N+D-1$. On the other hand, the first sum on the right hand side is $D|A|$. Thus we obtain
\[ \sum_{0 \leq d < d' < D}|(A+d) \cap (A+d')| \geq D|A| - (N+D-1) \geq (D\alpha-1)N - (D-1). \]
There are a total of $D(D-1)/2$ summands, and thus for some $0 \leq d < d'< D$ we have
\[ |(A+d) \cap (A+d')| \geq \frac{2}{D(D-1)} ((D\alpha-1)N - (D-1)) = \frac{2(D\alpha-1)}{D(D-1)}N - \frac{2}{D}. \]
The conclusion follows by noting that $|(A+d) \cap (A+d')|$ is precisely $r_{A-A}(d'-d)$, and $1 \leq d'-d < D$. 
\end{proof}

\subsection{Upper bound for $f_D(N,\alpha)$}

\begin{prop}\label{upper thm}
Let $2 \leq D \leq N$ and $\alpha \in (0,1)$ be parameters satisfying
\begin{equation}\label{2alpha}
 D(2-D\alpha) > 1 
\end{equation}
and
\begin{equation}\label{Nalpha}
 N \geq \frac{D(D-1)(2D+1)}{D(2-D\alpha)-1}. 
\end{equation}
Then
\begin{equation}\label{S2 upper}
f_{D}(N,\alpha) \leq  \frac{2(D\alpha-1)}{D(D-1)} N + 4.
\end{equation}
\end{prop}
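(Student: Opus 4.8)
The plan is to prove the upper bound by exhibiting, for the given range of parameters, an explicit set $A \subseteq [N]$ with $|A| \ge \alpha N$ whose close-range difference statistics are as flat as the lower bound in Proposition \ref{low thm} allows. I would build $A$ through its gap sequence: writing $A = \{a_0 < a_1 < \cdots < a_L\}$ and $g_i = a_i - a_{i-1}$, every difference $a_j - a_i$ with $i<j$ is a sum of consecutive gaps $g_{i+1} + \cdots + g_j$. The governing observation is that if the gaps are arranged so that $g_i + g_{i+1} \ge D$ for every $i$, then no sum of two or more consecutive gaps can fall in $[1,D)$, so the only representations seen by $M_D(A)$ come from single gaps. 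In that case $r_{A-A}(d) = \#\{i : g_i = d\}$ for each $1 \le d < D$, and hence $M_D(A)$ equals exactly the largest multiplicity among the values $\{1,\dots,D-1\}$ occurring in the gap multiset.

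With this reduction the problem becomes purely combinatorial: choose a multiset of gaps summing to $\approx N$ of cardinality $L \approx \alpha N$ (forcing $|A| \ge \alpha N$), minimizing the largest multiplicity of a value in $[1,D)$ subject to the consecutive-sum condition. I would use only two kinds of gaps: \emph{long} gaps equal to $D$, which produce a difference equal to $D$ and are therefore invisible to $M_D(A)$, and \emph{short} gaps valued in $\{1,\dots,D-1\}$. Setting $\gamma = D\alpha - 1$ and solving the two linear constraints $n_{\mathrm{long}} + n_{\mathrm{short}} = \alpha N$ and $n_{\mathrm{long}}\cdot D + n_{\mathrm{short}}\cdot (D/2) = N$, where the short gaps average $D/2$, forces $n_{\mathrm{long}} = (1-\gamma)N/D$ and $n_{\mathrm{short}} = 2\gamma N/D$; the hypothesis $D(2-D\alpha)>1$, i.e. $\gamma < 1 - 1/D$, guarantees $n_{\mathrm{long}} \ge 0$ with room to spare. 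Distributing the short gaps as evenly as possible over the $D-1$ admissible values makes each value occur about $n_{\mathrm{short}}/(D-1) = \tfrac{2(D\alpha-1)}{D(D-1)}N$ times, which is exactly the target value.

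The delicate point is realizing this histogram by an actual ordering of the gaps satisfying $g_i + g_{i+1}\ge D$ everywhere, since naively placed adjacent short gaps could sum to less than $D$ and create spurious small differences. I would make the short-gap histogram exactly symmetric under $v \mapsto D-v$, pair each copy of a value $v \le D/2$ with a copy of $D-v$, and lay the pairs down in order of increasing $v$; then within a pair the sum is exactly $D$, and between consecutive pairs the sum is $(D-v)+v' \ge D$ because $v' \ge v$. The long gaps, being equal to $D$, may be inserted anywhere without violating the condition. This yields a set $A$ with $M_D(A)$ equal to the maximal gap-multiplicity, namely $\tfrac{2(D\alpha-1)}{D(D-1)}N$ up to the rounding incurred in making $n_{\mathrm{long}}$, $n_{\mathrm{short}}$ and the per-value counts integral and in fitting $A$ inside $[N]$.

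I expect the main obstacle to be precisely the tension between flatness and the consecutive-sum condition: a flat histogram necessarily contains the smallest gaps, which are exactly the ones that risk summing below $D$, and only the symmetric-pairing layout resolves this conflict while preserving the optimal multiplicity. The remaining work is bookkeeping, and I anticipate that the quantitative hypothesis $N \ge \frac{D(D-1)(2D+1)}{D(2-D\alpha)-1}$ is exactly what forces $n_{\mathrm{long}}$, $n_{\mathrm{short}}$ and $c = \tfrac{2(D\alpha-1)}{D(D-1)}N$ to be large enough that the total accumulated rounding loss does not exceed the additive constant $4$ asserted in \eqref{S2 upper}, and that $n_{\mathrm{short}}$ can be split into equal symmetric parts keeping the histogram both flat and symmetric.
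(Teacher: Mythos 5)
Your proposal is correct, and at its core it builds the same object as the paper, but it organizes the argument around a different key lemma, so a comparison is worth making. The paper constructs a block $B \subseteq [D^2]$ whose sorted gap sequence is precisely your pairing pattern --- small gaps $v$ and $D-v$ interleaved so that adjacent gaps sum to $D$ or $D+1$ --- tiles $k$ copies of $B$ with period $D^2$, and pads with an arithmetic progression of common difference exactly $D$; it then verifies $r_{A-A}(d)\le 2k$ by a direct case analysis on how a difference $d<D$ can arise across the periodic structure (showing the period indices $i,i'$ must satisfy $|i-i'|\le 1$, and treating $d=1$ separately). You instead isolate the clean general principle that the paper only uses implicitly: if every two adjacent gaps sum to at least $D$, then every difference in $[1,D)$ is a single gap, so $M_D(A)$ equals the maximal multiplicity of a gap value below $D$. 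This reduction, plus the sorted-pair layout $(v, D-v)$ with $v$ increasing and the insertion of invisible gaps of length exactly $D$, replaces the paper's case analysis entirely and turns the rest into linear bookkeeping: the constraints $1+n_{\mathrm{long}}+c(D-1)\ge \alpha N$ and $n_{\mathrm{long}}D + c\,\tfrac{D(D-1)}{2}\le N-1$ force $c \le \tfrac{2(D\alpha-1)}{D(D-1)}N+2$, with the hypothesis on $N$ needed only to keep $n_{\mathrm{long}}\ge 0$ (your guess about its role is essentially right). Two small points you should nail down in a full write-up: when $D$ is even the self-paired value $D/2$ can have odd multiplicity, and the leftover copy must be placed where both its neighbors are large (e.g.\ at the end or next to a gap of length $D$); and the histogram should be taken exactly flat with an integer multiplicity $c$ rather than ``as even as possible,'' since exact flatness is what makes the $v\mapsto D-v$ pairing close up. With those details your argument gives the bound with additive constant $2$ rather than the paper's $4$, and it is arguably more transparent, since the verification is a one-line lemma rather than an inspection of the specific block $B$.
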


\begin{rem}
For example, if $\alpha = o(1)$ then these assumptions hold when $D \leq (2-c)/\alpha$ for any absolute constant $c>0$ such that $Dc >1$ and $N$ is much larger than $\alpha^{-2}$.
\end{rem}

\begin{proof}
We construct one subset $A$ with particular structure such that it has density at least $\alpha$ and $M_D(A)$ attains the upper bound in \eqref{S2 upper}.

We define a set $B \subseteq \{1,2,\cdots, D^2\}$ as following:
\begin{equation}
B := \{D, 2D, 3D,\cdots, D^2\} \cup \{1, D+2, 2D+3, \cdots, (D-2)D+D-1\}
\end{equation}
so that $|B| = 2D-1$. Let $M = \lfloor N/D\rfloor$ and let $k = \lceil (\alpha N - M)/(D-1) \rceil$ so that
\[ k \leq \frac{\alpha N - M}{D-1} + 1 \leq \frac{\alpha N - N/D}{D-1} + 2 = \frac{D\alpha-1}{D(D-1)}N + 2. \]
By the lower bound on $N$, we have
\[ kD \leq \frac{D\alpha-1}{D-1}N + 2D \leq \frac{N}{D}-1 \leq M. \]
Thus we may define $A_1,A_2 \subseteq [N]$ by
\begin{equation}
A_1:=\{x+iD^2 : x\in B, i=0,1,2,\cdots, k-1)\}, \ \ A_2 := \{jD : kD < j \le M\}, 
\end{equation}
and then take $A = A_1 \cup A_2$. Hence
\[ |A| = k|B| + (M-kD) = kD+M-k \geq \alpha N,  \]
where the last inequality follows by our choice of $k$.

We only need to show that  $r_{A-A}(d) \leq 2k$ holds for any $1 \leq d < D$ to complete our proof. For any $1 \leq d < D$, if $d = a-a'$ for some $a,a' \in A$, then we must have $a,a' \in A_1$ since any two distinct elements must have difference at least $D$ if at least one of them is in $A_2$. Writing $a = x+iD^2$ and $a' = x' + i'D^2$ with $x, x' \in B$ and $0 \leq i,i' < k$, we first observe that the possible values of $|i-i'|$ can only be $0$ or $1$. \\
If $|i-i'|\ge 2$, then by triangle inequality we have 
\[\left|(x+iD^2) - (x'+i'D^2)\right| \ge |i-i'|D^2 - \max_{x,x' \in B}|x-x'| \ge D^2 +1 > D.\]
If $|i-i'| = 1$, by considering the structure of $B$ we have if $\{x,x'\} \neq \{D^2,1\}$, then 
\[ \left|(x+iD^2) - (x'+i'D^2)\right| \ge D. \]
So one must have $\{x, x'\} = \{D^2, 1\}$ and $d=1$ if $|i - i'|=1$. As for $d=1$, if $i=i'$, then one must have $x=(D-2)D+D-1$ and $x'= (D-1)D$. So $r_{A-A}(1) = k + (k-1) < 2k$. As for $d \ge 2$, we showed that $i$ and $i'$ must satisfy $i=i'$ which gives $d=x-x'$. And the number of possible choices for $x,x'$ in $ B$ is not larger than $2$ in this case. So $r_{A-A}(d) \le 2k$ for all $d \neq 1$. In conclusion, the result holds for all $1\le d <D$.  
\end{proof}
At the end of this subsection, we give an explanation for why the principle of inclusion and exclusion works.
Since for any $y$ $\in$ $(A + i) \cap (A + j) \cap (A + k)$, where $ i, j, k \in \{0,1,2,\cdots, D-1\} $, there exists $x \in A$, such that $x+i,x+j,x+k$ are all in $A$, i.e., they lie in an interval with length $D-1$. But this is impossible in our constructed example, where any interval with length $D-1$ has at most $2$ elements instead of $3$.\\
There is no point in the intersection of the $3$ sets, so it is even impossible for $4$ or more sets.
Hence the values of all the moments with order greater than two must be zero.\\

\subsection{Conclusion for $f_D(N,\alpha)$ with $ \alpha^{-1} < D \le (2-\delta)\alpha^{-1}$}
In this part, we combine the two propositions before to prove Theorem \ref{main-thm-1}.

\begin{proof}[Proof of Theorem \ref{main-thm-1}]
The lower bound in Proposition~\ref{low thm} and the upper bound in Proposition~\ref{upper thm} together give us the result in Theorem \ref{main-thm-1}. We only need to check that the assumptions \eqref{2alpha} and \eqref{Nalpha} are satisfied.\\
Since there exists $\delta > 0$ such that $D\le (2-\delta)\alpha^{-1}$, the assumption $(1)$ in Theorem \ref{main-thm-1} holds by the fact that 
\[ D(2-D\alpha) \ge D\delta >1. \]
 Now we check that assumption \eqref{Nalpha} holds. Notice that
\[ \frac{D(D-1)(2D +1)}{D(2-D\alpha)-1} \le \frac{2D^3}{D\delta -1}.\] 
From assumption $(2)$ in Theorem \ref{main-thm-1}, we know \eqref{Nalpha} holds.
\end{proof}

\begin{rem}\label{Idea}
From the whole pattern, we shall see that $D=\lfloor \alpha^{-1} \rfloor + 1$ is a crucial case in the problem. We state this result separately here, which has been covered in Theorem \ref{main-thm-1}. To make the form nicer, we state the result in a special case that $\alpha^{-1}$ is an integer.
\[  f_{D}(N,\alpha)=   \frac{2\alpha^3}{1+\alpha}N + O(1).    \]
\end{rem}
This result can also compare to \cite[Lemma 2.3]{Ole}. We state their result here.
\begin{lem}\label{Ole}
Suppose $S \subset [H, H + N]$ and $|S| = \alpha N$. Here $[H, H+N]$ is an integer interval. Then there exists a positive integer $d$, such that $d \le 1/ \alpha$ and 
\[r_{S-S}(d) \ge \frac{ \alpha^3}{2}N. \] 
\end{lem}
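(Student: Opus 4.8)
The plan is to specialise the inclusion–exclusion argument of Proposition~\ref{low thm} to the threshold $D := \lfloor \alpha^{-1}\rfloor + 1$. This is the largest $D$ for which every difference produced stays in the admissible range: the nontrivial differences arising below will be integers $d$ with $1 \le d \le D-1 = \lfloor \alpha^{-1}\rfloor \le \alpha^{-1}$, while at the same time this choice guarantees $D\alpha > 1$ (since $D > \alpha^{-1}$), which is exactly what makes the union bound nonvacuous. Considering the $D$ translates $S, S+1, \dots, S+(D-1)$, all contained in the integer interval $[H, H+N+D-1]$, the union has size at most $N+D$, whereas $\sum_{0 \le d < D}|S+d| = D|S| = D\alpha N$. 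Feeding these into the Bonferroni inequality \eqref{inclusion principle} gives
\[ \sum_{0 \le d < d' < D} r_{S-S}(d'-d) \;\ge\; D\alpha N - (N+D). \]

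Next I would pass from this total to a single popular difference by averaging. There are $D(D-1)/2$ pairs $(d,d')$ with $0 \le d < d' < D$, each contributing the term $r_{S-S}(d'-d)$ with $1 \le d'-d \le \alpha^{-1}$, so pigeonhole produces an integer $d \le \alpha^{-1}$ with
\[ r_{S-S}(d) \;\ge\; \frac{2\bigl(D\alpha N - (N+D)\bigr)}{D(D-1)} \;=\; \frac{2(D\alpha-1)}{D(D-1)}\,N - O(1), \]
which is precisely the lower bound of Proposition~\ref{low thm}. In the intended regime $\alpha^3 N \to \infty$ the additive $O(1)$ loss is harmless, so the whole task reduces to the clean inequality $\frac{2(D\alpha-1)}{D(D-1)} \ge \frac{\alpha^3}{2}$.

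It is this last step that carries all the content. Writing $\alpha^{-1} = m + f$ with $m = \lfloor \alpha^{-1}\rfloor$ and $0 \le f < 1$, one has $D = m+1$, and the target becomes an inequality in $m$ and $f$ alone; in the cleanest case $f = 0$ it reads $\frac{2\alpha^3}{1+\alpha} \ge \frac{\alpha^3}{2}$, the estimate already recorded in Remark~\ref{Idea}, which holds with room to spare.

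The main obstacle is precisely the integer-part bookkeeping hidden in the factor $D\alpha - 1$. This factor is largest when $\alpha^{-1}$ is (near-)integral and shrinks as $\alpha^{-1}$ rises just above an integer — the same transition regime in which the extremal constructions of Proposition~\ref{upper thm} force $f_D(N,\alpha)$ to be genuinely small. Consequently the delicate point is to confirm that, having chosen $D = \lfloor\alpha^{-1}\rfloor + 1$ to push $D\alpha$ above $1$, the resulting constant still clears the slack factor $\tfrac12$ throughout the range of $\alpha$ relevant to the application; every other ingredient is a direct transcription of the union-bound computation already carried out for Proposition~\ref{low thm}.
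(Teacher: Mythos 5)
You should first be aware that the paper contains no proof of Lemma~\ref{Ole}: it is quoted verbatim from \cite{Ole} purely as a point of comparison with Remark~\ref{Idea}, so your argument has to stand entirely on its own. It does not, and the failure is exactly at the step you deferred. Your whole proof reduces to the ``clean inequality'' $\frac{2(D\alpha-1)}{D(D-1)} \ge \frac{\alpha^3}{2}$ with $D=\lfloor \alpha^{-1}\rfloor+1$, which you verify only when $\alpha^{-1}$ is an integer. In general it is false. Write $\alpha^{-1}=m+f$ with $m=\lfloor\alpha^{-1}\rfloor$ and $0\le f<1$; then $D\alpha-1=\frac{1-f}{m+f}$, so
\[
\frac{2(D\alpha-1)}{D(D-1)}=\frac{2(1-f)}{(m+f)\,m(m+1)},
\qquad
\frac{\alpha^3}{2}=\frac{1}{2(m+f)^3},
\]
and the required inequality is equivalent to $4(1-f)(m+f)^2\ge m(m+1)$. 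As $f\to 1^-$ the left side tends to $0$ while the right side is at least $m(m+1)$; concretely, for $\alpha^{-1}=10.9$ (so $m=10$, $f=0.9$) one gets $4(0.1)(10.9)^2\approx 47.5 < 110$. So for every $\alpha$ whose reciprocal has fractional part larger than roughly $3/4$, the pigeonhole bound you land on is smaller than the target $\frac{\alpha^3}{2}$, by an arbitrarily large factor as $f\to1$. (A secondary leak: the lemma as stated carries no hypothesis like $\alpha^3N\to\infty$, so the additive $O(1)$ losses are not automatically ``harmless''; but this is minor compared to the failure of the main inequality.)

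Moreover, this gap cannot be repaired within your approach. The threshold $D=\lfloor\alpha^{-1}\rfloor+1$ is the \emph{only} admissible choice (any $D\le\lfloor\alpha^{-1}\rfloor$ makes $D\alpha-1<0$ and the bound vacuous), and Proposition~\ref{upper thm} of the paper shows that the bound of Proposition~\ref{low thm} is sharp up to $O(1)$ at this threshold: the explicit construction there produces a set $A$ of density only $O(D/N)$ above $\alpha$ with $r_{A-A}(d)\le \frac{2(D\alpha-1)}{D(D-1)}N+4\approx 2(1-f)\alpha^3N$ for \emph{every} $1\le d\le\lfloor\alpha^{-1}\rfloor$. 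When $f>3/4$ this is below $\frac{\alpha^3}{2}N$, so no averaging argument over the range $d\le 1/\alpha$ --- indeed no argument at all --- can deliver the stated conclusion; the statement as transcribed here is actually in tension with the paper's own Theorem~\ref{main-thm-1} in that regime, and can only be read consistently if the admissible range of $d$ is slightly enlarged or the constant reduced. For instance, with $d\le\lceil 1/\alpha\rceil$ allowed, your argument run with $D=\lceil 1/\alpha\rceil+1$ gives $D\alpha-1\ge\alpha$ and hence a constant close to $2$ in place of $\frac12$, uniformly in $\alpha\le 1/2$. As it stands, however, the ``delicate point'' you flagged is precisely where the proof breaks, and it cannot be cleared.
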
 

In Theorem \ref{main-thm}, there is one interesting phenomenon when $\gamma$ is relatively small. Precisely, if $\gamma =O(\alpha)$ then we have $f \asymp \alpha^3 N$, e.g., the case in remark \ref{Idea}.

\section{Proof of the lower bound in Theorem \ref{main-thm} $(3)$}

In this section, we use Fourier analytic tools to prove Theorem \ref{Lingbo 1} which gives the lower bound of $f_D(N,\alpha)$ for a wide range of $D$. The argument is similar to the proof of \cite[Theorem 1.1]{Ben}.

\begin{thm}\label{Lingbo 1}
Let $2 \le D \le N$, then 
\begin{equation}\label{Increasing}
f_D(N,\alpha) \ge \frac{\alpha^2N^2}{N +D} - \frac{\alpha N +1}{D}.
\end{equation}
In particular, if $D\alpha \rightarrow \infty $, we have 
\begin{equation}\label{Finally}
f_D(N,\alpha) \ge (1+o(1))\alpha^2 N.
\end{equation}
\end{thm}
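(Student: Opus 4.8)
The plan is to reduce Theorem~\ref{Lingbo 1} to the single pointwise estimate
\[ M_D(A) \;\ge\; \frac{|A|^2}{N+D} - \frac{|A|+1}{D} \qquad\text{valid for every } A \subseteq [N], \]
and then to minimise the right-hand side over $|A| \ge \alpha N$ to obtain \eqref{Increasing}, deducing \eqref{Finally} by letting $D\alpha \to \infty$. Following the cyclic-group strategy announced in the introduction, I would first embed $A \subseteq [N]$ into $G = \Z/(N+D)\Z$. The point of the modulus $N+D$ is that for $1 \le d < D$ a congruence $a - a' \equiv d$ with $a,a' \in [N]$ forces the genuine integer identity $a - a' = d$ (the alternative $a-a' = d-(N+D)$ is impossible, since $|d-(N+D)| > N-1$ exceeds the largest integer difference); hence $r_{A-A}(d)$ is unchanged by passing to $G$, and the boundary effects on the line are absorbed into the ambient cyclic symmetry. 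Since $N \ge D$ we also have $N+D \ge 2D$, so no small difference wraps around.

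The core computation is a second-moment (Fourier/Fej\'er) estimate for the number of elements of $A$ lying in windows of length $D$. For $t \in G$ put $c_t := \#\bigl(A \cap \{t,t+1,\dots,t+D-1\}\bigr)$, so that $c_t = (1_A * 1_I)(t+D-1)$ with $I = \{0,1,\dots,D-1\}$. Each element of $A$ lies in exactly $D$ of the $n = N+D$ windows, so $\sum_{t\in G} c_t = D|A|$, and Cauchy--Schwarz over the $n$ windows gives
\[ \sum_{t\in G} c_t^2 \;\ge\; \frac{\bigl(\sum_{t} c_t\bigr)^2}{n} \;=\; \frac{D^2|A|^2}{N+D}. \]
Expanding the square and counting, for each ordered pair $(a,a')$, the windows that contain both yields the Fej\'er identity
\[ \sum_{t\in G} c_t^2 \;=\; D|A| + 2\sum_{d=1}^{D-1}(D-d)\,r_{A-A}(d), \]
the weight $(D-d)$ being exactly the autocorrelation $(1_I \star 1_I)(d)$; in Fourier language this is the statement $\sum_t c_t^2 = \tfrac{1}{n}\sum_\xi |\widehat{1_A}(\xi)|^2\,|\widehat{1_I}(\xi)|^2$ with $|\widehat{1_I}|^2$ the nonnegative Fej\'er kernel, and the Cauchy--Schwarz step is simply the retention of the $\xi = 0$ term. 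Bounding the weighted sum crudely by $2\sum_{d=1}^{D-1}(D-d)\,r_{A-A}(d) \le M_D(A)\,D(D-1) \le M_D(A)\,D^2$ and combining the two displays gives $D|A| + M_D(A)D^2 \ge D^2|A|^2/(N+D)$, which rearranges to the pointwise estimate above.

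It then remains to optimise and pass to the limit. Writing $\phi(x) = \frac{x^2}{N+D} - \frac{x+1}{D}$, one checks $\phi$ is increasing for $x \ge (N+D)/(2D)$; whenever the target value $\phi(\alpha N)$ is nonnegative this threshold lies below $\alpha N$, so $f_D(N,\alpha) = \min_{|A| \ge \alpha N} M_D(A) \ge \min_{x \ge \alpha N}\phi(x) = \phi(\alpha N)$, which is exactly \eqref{Increasing}; and when $\phi(\alpha N) < 0$ the inequality \eqref{Increasing} is vacuous, so it holds for all $2 \le D \le N$. For \eqref{Finally}, note $\phi(\alpha N) = \alpha^2 N\cdot \tfrac{N}{N+D} - \tfrac{\alpha N+1}{D}$: as $D\alpha \to \infty$ the subtracted term is $o(\alpha^2 N)$, and in the regime $D = o(N)$ relevant to Theorem~\ref{main-thm}$(3)$ the factor $\tfrac{N}{N+D} \to 1$, giving $f_D(N,\alpha) \ge (1+o(1))\alpha^2 N$.

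The argument is short, so the real work lies in bookkeeping rather than in any single hard step, and I expect the main conceptual obstacle to be locating the right nonnegative test weight. The mechanism that makes a \emph{lower} bound for the maximum possible at all is the positivity $|\widehat{1_I}|^2 \ge 0$, equivalently $(D-d)_+ \ge 0$: pairing $1_A$ with a nonnegative kernel supported on the differences $|d| < D$ converts the trivial mass identity $\sum_t c_t = D|A|$ into a genuine lower bound for $\sum_{1\le d<D} r_{A-A}(d)$, which is precisely the device of \cite[Theorem 1.1]{Ben}. The step I would be most careful about is the choice of modulus $N+D$ together with the verification that no difference in $\{1,\dots,D-1\}$ wraps around, since this is what pins down the denominator $N+D$ exactly and forces the cyclic count to agree with the integer representation function; the remaining looseness (the $+1$ in $\tfrac{|A|+1}{D}$ and the passage $D(D-1)\le D^2$) is harmless slack, while the monotonicity used to substitute $|A| = \alpha N$ must be paired with the observation that \eqref{Increasing} is already vacuous outside the regime $\alpha D \gtrsim 1$.
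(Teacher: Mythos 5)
Your proof of \eqref{Increasing} is correct, and despite the physical-space packaging it is essentially the paper's own argument: your window counts $c_t$, the Fej\'er identity $\sum_t c_t^2 = D|A| + 2\sum_{d=1}^{D-1}(D-d)r_{A-A}(d)$, and the Cauchy--Schwarz step over the $N+D$ windows are exactly the paper's correlation sum $E=\sum_x (1_A\star 1_A)(x)\,(1_I\star 1_I)(x)$ evaluated via Plancherel with only the zero frequency retained. The embedding into $\Z/(N+D)\Z$ with the wraparound check, the crude bound by $M_D(A)D^2$, and the treatment of the minimization over $|A|\ge \alpha N$ (the paper reduces to $|A|=\lceil \alpha N\rceil$; your monotonicity-of-$\phi$ argument, with the observation that \eqref{Increasing} is vacuous when $\phi(\alpha N)<0$, is an equally valid bookkeeping) all match.

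However, your deduction of \eqref{Finally} has a genuine gap. You only obtain $f_D(N,\alpha)\ge(1+o(1))\alpha^2N$ when $D=o(N)$, and you justify this restriction by asserting that $D=o(N)$ is ``the regime relevant to Theorem~\ref{main-thm}(3)''---but that is not so: Theorem~\ref{main-thm}(3) (and the connection to Sidon sets) requires $D$ as large as $N$, and for $D\asymp N$ your bound degrades by the factor $\tfrac{N}{N+D}$; at $D=N$ it gives only $(\tfrac12+o(1))\alpha^2N$, which does not prove \eqref{Finally} as stated. The missing ingredient is the monotonicity of $f_D$ in $D$: since $M_D(A)$ is a maximum over $1\le d<D$, the function $f_D(N,\alpha)$ is nondecreasing in $D$, so given any $D\le N$ with $D\alpha\to\infty$ one applies \eqref{Increasing} with $D'=\min\bigl(D,\lfloor\sqrt{N/\alpha}\rfloor\bigr)$, which satisfies $D'\alpha\ge\min\bigl(D\alpha,\sqrt{N\alpha}-\alpha\bigr)\to\infty$ and $D'=o(N)$ (because $N\alpha\ge D\alpha\to\infty$), whence $f_D(N,\alpha)\ge f_{D'}(N,\alpha)\ge(1+o(1))\alpha^2N$ for the full range $2\le D\le N$. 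This one-line reduction is exactly how the paper closes its proof, and without it your argument proves a strictly weaker statement.
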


\begin{proof}
Let $A$ be any subset of $[N]$ with density at least $\alpha$. We regard $A$ as a subset of $\mathbb{Z}/(N+D) \mathbb{Z}$ in the natural way.
To fix our notation, we define the convolution of $f,g$ where $f,g: G \rightarrow \mathbb{C}$ are two functions on the abelian group $G$.
\[f * g (x) := \sum_{y \in G} f(y)  \overline {g(x-y)}.\]
And we use our traditional notation to get that 
\[1_A * 1\textordmasculine_A(x) = \# \{ (a,a') \in A \times A: a-a'=x\},\]
where $f \textordmasculine(x):= f(-x)$.\\
We use $M_D(A)$ to denote the quantity the same as in the case of integers before. It is not true that for general $x \in \mathbb{Z}/(N+D) \mathbb{Z}$, $1_A * 1\textordmasculine_A(x) \le M_D(A)$ in modular version. However, for any $x$ with $0<|x|\le D$, in the modular version we still have
\[1_A * 1\textordmasculine_A(x) \le M_D(A).\]
Let $I$ be the characteristic function of the set $\{1,2,\cdots, D\}$. Then we estimate the following quantity.
\[E:= \sum_{x \in \mathbb{Z}/(N+D) \mathbb{Z}}1_A * 1\textordmasculine_A(x) I*I\textordmasculine(x).\]
We first notice that each summand is zero when $|x| > D$. Combining this and what we discussed above, we have 
\[E \le |A| D + \sum_{0 < |x| \le D} 1_A * 1\textordmasculine_A(x) I*I\textordmasculine(x). \]
The right hand side can be further upper bounded by 
\[|A|D +  M_D(A) D^2. \]
On the other hand, we use Plancherel's identity to get
\[ E = (N+D) \sum_{r}|\hat 1_A(r)|^2|\hat I(r)|^2 \ge \frac{|A|^2 D^2}{N+D}. \]
The above two inequalities give us the bound for $M_D(A)$ for any subset $A$ with $|A| \ge \alpha N$. For any $A$ such that can make $M_D(A) = f_D(N,\alpha)$, we must have $|A|= \lceil \alpha N \rceil$. By substituting this into our expression, we get the lower bound. 
Notice that if $D\alpha \rightarrow \infty$, the lower bound in \eqref{Increasing} has main term $\frac{\alpha^2 N^2}{N + D}$. This main term is a decreasing function of $D$. Therefore, under the assumption that $D \alpha \rightarrow \infty$, it is better to use the lower bound in \eqref{Increasing} with $D=o(N)$ and it gives \eqref{Finally}. Since function $f_D(N, \alpha)$ is an increasing function of $D$, with the assumption that $D \alpha \rightarrow \infty$, \eqref{Finally} holds no matter $D = o(N)$ holds or not.
\end{proof}

\section{Proof of the upper bound in Theorem \ref{main-thm} $(3)$}
In this section we study the upper bound of $f_D(N, \alpha)$ with given density $\alpha$ and $N$. We consider the case $D= N$, i.e., we can regard it as restriction free on the range of differences of any two distinct elements in $A$. 

Let $f_N(\alpha)$ simplify the notation $f_D(N,\alpha)$ we defined before in the case $D=N$.
\begin{thm}\label{Lingbo 2}
Let $\alpha \in (0,1)$. If  $\alpha^2 N \rightarrow \infty$ as $N \rightarrow \infty$, then 
\[f_N(\alpha) = (1+o(1))\alpha^2 N. \]
\end{thm}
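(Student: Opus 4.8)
The plan is to treat the two bounds separately. The matching lower bound $f_N(\alpha) \ge (1+o(1))\alpha^2 N$ comes essentially for free from Theorem~\ref{Lingbo 1}: since $\alpha^2 N \to \infty$ together with $\alpha<1/2$ forces $\alpha N \to \infty$, one may choose an auxiliary threshold $D$ with $\alpha^{-1} \ll D \ll N$ (for instance $D=\lfloor\sqrt{N/\alpha}\rfloor$), so that $D\alpha \to \infty$ and $D=o(N)$. Then monotonicity of $f_D(N,\alpha)$ in $D$ gives $f_N(\alpha) \ge f_D(N,\alpha) \ge \frac{\alpha^2 N^2}{N+D} - \frac{\alpha N+1}{D} = (1-o(1))\alpha^2 N$, where the error term is $o(\alpha^2 N)$ for this choice of $D$. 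Hence the entire content of the theorem lies in the \emph{upper bound}: for every $\varepsilon>0$ and all large $N$ I must exhibit a set $A\subseteq[N]$ with $|A|\ge \alpha N$ and $\max_{1\le d<N} r_{A-A}(d) \le (1+\varepsilon)\alpha^2 N$.

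First I would pass to a cyclic group. Fix $q$ with $N\le q=(1+o(1))N$ and regard $[N]$ inside $\Z/q\Z$; write $r^{\mathrm{cyc}}_{R-R}(d)$ for the autocorrelation computed in $\Z/q\Z$. For any $R\subseteq\Z/q\Z$ and $A=R\cap[N]$, every integer solution of $a-a'=d$ with $a,a'\in A$ also satisfies $a-a'\equiv d\pmod q$, so for $0<d<q$ one has the one-sided comparison $r_{A-A}(d)\le r^{\mathrm{cyc}}_{R-R}(d)$. It therefore suffices to construct $R\subseteq\Z/q\Z$ of density slightly above $\alpha$ whose cyclic autocorrelation is nearly flat, $\max_{d\ne 0} r^{\mathrm{cyc}}_{R-R}(d)\le(1+\varepsilon)\alpha^2 q$. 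The point of this reduction is that the difference $d=q$, which is the fatal common difference of a dilated arithmetic progression, simply disappears modulo $q$, so genuinely flat-autocorrelation objects become available.

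For the construction of $R$ I would split according to how fast $\alpha^2 N$ grows. When $\alpha^2 N\gg\log N$, a uniformly random $R$ (each residue chosen independently with probability $\alpha$) suffices: each $r^{\mathrm{cyc}}_{R-R}(d)$ has mean $\alpha^2 q$ and, being a sum of mildly dependent bounded terms of variance $\Theta(\alpha^2 q)$, obeys a Bernstein estimate $\mathbb{P}\bigl(r^{\mathrm{cyc}}_{R-R}(d)>(1+\varepsilon)\alpha^2 q\bigr)\le \exp(-c\varepsilon^2\alpha^2 q)$, and a union bound over the $q-1$ nonzero differences closes precisely because $\alpha^2 q\gg\log q$. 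The remaining regime, $\alpha^2 N=g$ with $g\to\infty$ but $g=O(\log N)$, is where $\alpha=N^{-1/2+o(1)}$ and $|A|=N^{1/2+o(1)}$ sits just above the Sidon threshold; here I would abandon randomness in favour of explicit near-perfect difference sets (Singer/Bose--Chowla type), whose autocorrelation is identically $\lambda=k(k-1)/(q-1)=(1+o(1))\alpha^2 q$ on \emph{every} nonzero difference, which is exactly the flatness required; densities between those realised by individual such sets can be reached by tuning parameters or taking unions of cosets of a multiplicative subgroup, whose autocorrelation is controlled deterministically by Weil/Gauss-sum estimates with error $o(\alpha^2 q)$ once $\alpha\gg N^{-1/4}$.

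The main obstacle is this sparse, slowly-growing regime. There the probabilistic method is genuinely unavailable, since the union bound over $\sim N$ differences costs a $\log N$ factor that $\alpha^2 N$ cannot absorb, forcing a structured construction with uniform autocorrelation $(1+o(1))\alpha^2 N$ and \emph{no} probabilistic slack. Two delicate points must be handled: matching the density to an \emph{arbitrary} admissible $\alpha$ (not merely the discrete family of densities produced by perfect difference sets) while keeping the constant equal to $1$; and avoiding the loss incurred by truncating $R$ to $[N]$, which in the sparse range $|A|=N^{1/2+o(1)}$ is not automatically negligible under current prime-gap bounds, so that one must instead build the generalized Sidon set directly inside $[N]$. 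This is exactly where the connection with generalized Sidon sets (Theorem~\ref{CRV} and the methods of \cite{sidon}) is decisive, since near-optimal $B_2[g]$ constructions valid for all $g\to\infty$ are precisely what supply flat-autocorrelation sets of the required size, and verifying that they can be produced for every $N$ and every $\alpha$ with $\alpha^2 N\to\infty$ is the crux of the argument.
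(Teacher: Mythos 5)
Your lower bound argument is correct and is essentially the paper's: Theorem \ref{Lingbo 1} applied with an auxiliary $D$ satisfying $D\alpha\to\infty$, $D=o(N)$, plus monotonicity of $f_D(N,\alpha)$ in $D$, and your choice $D=\lfloor\sqrt{N/\alpha}\rfloor$ does make both error terms $o(\alpha^2N)$. Your high-level skeleton for the upper bound (work in a cyclic group of size close to $N$, where the integer autocorrelation is dominated by the cyclic one, and build a set with uniformly flat cyclic autocorrelation) is also the paper's. The genuine gap is in the sparse regime $\alpha^2N\to\infty$ with $\alpha^2N=O(\log N)$, which you yourself call ``the crux'' but do not resolve, and the tools you name cannot resolve it. Perfect difference sets of Singer/Bose--Chowla type have rigid parameters: in a group of size $q\approx p^{d+1}$ they have $k\approx p^{d}$ elements and flat level $\lambda\approx p^{d-1}\approx q^{(d-1)/(d+1)}$ with $d$ a positive integer, so with $q\approx N$ the only achievable flat levels are $\lambda\approx 1,\,N^{1/3},\,N^{1/2},\dots$; there is no such set with $\lambda=\alpha^2N$ tending to infinity slowly (say $\alpha^2N=\log\log N$), which is exactly the range the theorem must cover. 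Your fallback, unions of cosets of a multiplicative subgroup, you concede requires $\alpha\gg N^{-1/4}$, i.e.\ $\alpha^2N\gg N^{1/2}$, a range already handled by your random construction, so it contributes nothing new. Finally, the appeal to ``near-optimal $B_2[g]$ constructions'' is both uncarried-out and unsafe: Theorem \ref{CRV} controls $r_{A+A}$ with limiting constant $\sigma\neq 1$, pointwise flatness of sums does not transfer to differences, and the difference-set statement with constant $1$ (the paper's Corollary) is itself deduced \emph{from} Theorem \ref{main-thm}, so invoking it here would be circular.

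What is missing is precisely the content of the paper's Section 4: an explicit family whose flatness parameter is continuously tunable. The paper takes $A=\bigcup_{u=n^*+1}^{n^*+k}A_u\subseteq\Z/p\Z\times\Z/p\Z$, where $A_u=\{(x,x^2/u):x\in\Z/p\Z\}$ is a parabola and $k\le p/2$ is a \emph{free} parameter (this freedom is the whole point). Lemma \ref{dis} reduces $r_{A-A}(a,b)$ to quadratic-residue cancellations along the window of $u$'s; Weil's theorem (Lemma \ref{S}) plus averaging over the window position produces an $n^*$ with $\max_{(a,b)\neq(0,0)}r_{A-A}(a,b)\le k^2+7k^{7/4}$; Lemma \ref{thm 2} then projects the construction to $\Z/p^2s\Z$ with density about $k/p$ and autocorrelation at most $(k^2+7k^{7/4})(s+1)=(1+o(1))\alpha^2q$ as soon as $k,s\to\infty$, however slowly. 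Taking $p\in[P,2P]$ with $p=o(\sqrt N)$, $k=\lceil p\alpha\rceil+1$, $s=\lfloor N/p^2\rfloor$, and embedding $[p^2s]\subseteq[N]$ (embedding the smaller group rather than truncating a larger one, so no prime-gap input is needed) completes the proof. Without this construction, or a genuine substitute supplying flat autocorrelation at \emph{every} slowly growing level $\alpha^2N\to\infty$ and every admissible density, your proposal establishes only the lower bound and the dense case, not the theorem.
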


In the above condition, we need to prove the result for $\alpha < 1- \epsilon$ for any positive $\epsilon$ smaller than one. Aiming for a simple expression and without loss of generality, we will only prove the case $\alpha<1/2$ for the upper bound.\\
The upper bound here can be proved by the deterministic method. It also can be proved by the probabilistic method, however, a shortcoming of probabilistic method here is we can not always avoid the non-essential needed restriction on the range of density $\alpha$ that it be much larger than $  N^{-1/2} (\log N)^{-1/2}$; at least the author does not know how to avoid this. So in this paper, we use the deterministic approach to prove asymptotic formulas for the function $f$ with loose restriction on $\alpha$.  Before we prove the upper bound in the case of integers, we first prove the following theorem which gives us the results in cyclic groups and would imply the results in $\mathbb{Z}$.

\begin{thm}\label{Z}
Let $N = p^2 s$ for some positive integer $s$ and prime number $p$. Let $\mathbb{Z}/N\mathbb{Z}$ be a cyclic group and $\alpha \in (0, 1)$. 
We define 
\[f_N(\alpha) : =  \min_{A \subseteq  \mathbb{Z}/N\mathbb{Z}, |A| \ge \alpha N} \max_{d \neq 0} r_{A-A}(d).\]
If $p, s, p \alpha \rightarrow \infty $, then  
\[f_N(\alpha) = (1+o(1))\alpha^2 N .\]
\end{thm}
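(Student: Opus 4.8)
\emph{Proof strategy.} The statement is an equality, so there are two matching bounds, and the two are of very different character. The lower bound $f_N(\alpha)\ge (1-o(1))\alpha^2 N$ is immediate from double counting: for any $A\subseteq \mathbb{Z}/N\mathbb{Z}$ with $|A|\ge \alpha N$ one has $\sum_{d\neq 0} r_{A-A}(d)=|A|^2-|A|$, spread over $N-1$ nonzero values of $d$, so that $\max_{d\neq 0} r_{A-A}(d)\ge \frac{|A|^2-|A|}{N-1}\ge \frac{\alpha^2N^2-\alpha N}{N-1}=(1+o(1))\alpha^2 N$, the error being negligible because $\alpha^2 N=(p\alpha)^2 s\to\infty$ under our hypotheses. (This is also the cyclic-group version of the Fourier estimate behind Theorem~\ref{Lingbo 1}.) Hence the whole content is the matching upper bound: I must exhibit $A$ of density $\alpha$ whose difference representation function is \emph{nearly flat}, $r_{A-A}(d)\le (1+o(1))\alpha^2 N$ for every $d\neq 0$; in other words an approximate $(N,\alpha N,\alpha^2 N)$-difference set. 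The point of a deterministic construction is to make the generic differences hit the mean \emph{exactly}, with none of the $\sqrt{\log N}$ loss that a random set would incur.

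The engine for this is the quadratic (``parabola'') construction over the field $\mathbb{F}_p=\mathbb{Z}/p\mathbb{Z}$, which is exactly why the hypothesis $N=p^2s$ with $p$ prime is imposed. Writing each element of $\mathbb{Z}/N\mathbb{Z}$ uniquely as $x=a+pw$ with $a\in\{0,\dots,p-1\}$ and $w\in\{0,\dots,ps-1\}$, I plan to take
\[
A=\bigl\{\,a+p\,\bigl((a^2 s+t)\bmod ps\bigr)\ :\ a\in\{0,\dots,p-1\},\ t\in T\,\bigr\},
\]
where $a^2$ is reduced modulo $p$ and $T\subseteq\{0,\dots,ps-1\}$ has $|T|=\lceil\alpha p s\rceil$, so that $|A|\ge \alpha N$. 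The reduction modulo $p$ recovers the coordinate $a$, and the decisive algebraic fact is that for a fixed nonzero $\delta$, as $(a_1,a_2)$ range over pairs with $a_1-a_2\equiv\delta\pmod p$ the quantity $a_1^2-a_2^2=\delta(a_1+a_2)$ takes every residue modulo $p$ exactly once. Consequently, for every $d$ with $d\not\equiv 0\pmod p$ each admissible pair $(t_1,t_2)\in T\times T$ determines $(a_1,a_2)$ uniquely, and the count reduces to the number of $(t_1,t_2)$ satisfying a single linear condition modulo $s$. Choosing $T$ \emph{equidistributed modulo $s$} then forces $r_{A-A}(d)=(1+o(1))|T|^2/s=(1+o(1))\alpha^2 N$ for all these $d$. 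These are a $(1-1/p)$-fraction of all nonzero differences, and they are handled exactly and deterministically.

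The hard part will be the remaining differences $d\equiv 0\pmod p$ — a $1/p\to 0$ fraction, but the max is taken over all of them. Here the quadratic term is invisible (one is forced into $a_1=a_2$), and the count collapses to $r_{A-A}(d)=p\cdot r_{T-T}(e)$ for the corresponding $e$ in $\mathbb{Z}/ps\mathbb{Z}$. Since the average of $r_{T-T}$ is already $\approx |T|^2/(ps)=\alpha^2 ps$, I need $r_{T-T}$ to be flat to within $(1+o(1))$, which is the \emph{same} flatness problem one factor of $p$ smaller. This is the main obstacle, and it is exactly what the extra factor $s\to\infty$ is meant to buy. I see two ways to close it, both deterministic: (i) \emph{induction}, recognising the problem for $T$ in $\mathbb{Z}/ps\mathbb{Z}$ as an instance of the same statement with one power of $p$ peeled off, iterating down until the ambient group is small and $s$ supplies enough room to place a flat set; or (ii) \emph{decoupling by dilation}, giving the $p$ columns independent dilates $\lambda_a$ of $T$ so that the degenerate count becomes $\sum_a r_{T-T}(\lambda_a^{-1}e)$, an average of $T$'s autocorrelation over an equidistributed set of arguments that recovers the mean $\alpha^2 N$ no matter whether $T$ itself is flat. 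Route (ii) reduces the whole difficulty to a single deterministic equidistribution (exponential-sum) estimate for the dilates, using $p$ prime and $p\alpha,s\to\infty$. In either route the genuinely fiddly bookkeeping — the carries in the bijection $x=a+pw$, and the fact that $\mathbb{Z}/N\mathbb{Z}$ need not split as $\mathbb{Z}/p^2\mathbb{Z}\times\mathbb{Z}/s\mathbb{Z}$ when $\gcd(p,s)>1$ — should only affect lower-order terms, and combining the two regimes yields $r_{A-A}(d)\le(1+o(1))\alpha^2 N$ throughout, matching the lower bound.
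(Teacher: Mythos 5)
Your lower bound is exactly the paper's counting argument and is fine; the gap is in the upper bound, and it is structural, not bookkeeping. Your set $A$ is a product of a \emph{single} parabola with an auxiliary set $T\subseteq\mathbb{Z}/ps\mathbb{Z}$ of density $\alpha$, so the entire density burden falls on $T$. As you compute yourself, for $d\equiv 0\pmod p$ the count collapses to $r_{A-A}(d)=p\,r_{T-T}(e)$, so you need $r_{T-T}(e)\le(1+o(1))|T|^2/(ps)$ for every $e\neq 0$: that is, $T$ must be a density-$\alpha$ subset of a cyclic group whose difference representation function is flat to within a factor $1+o(1)$. This is precisely the statement you are trying to prove, with $N$ replaced by $ps$ and the same density, so the construction is circular, and neither of your proposed routes closes the circle. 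Route (i) is not a well-founded induction: the new modulus $ps$ need not have any square prime factor (if $s$ is squarefree and coprime to $p$, then $ps$ is squarefree), and there is no base case, because the problem never trivializes --- the trivial bound $r_{T-T}\le |T|=\alpha\cdot ps$ exceeds the target $(1+o(1))\alpha^2 ps$ by a factor $1/\alpha$ no matter how small the ambient group is. Route (ii) hits the square-root barrier: expanding $\sum_a r_{T-T}(\lambda_a^{-1}e)$ in Fourier and bounding the error by $|T|\cdot\max_{\xi\neq 0}\bigl|\sum_a \exp(2\pi i \xi\lambda_a^{-1}e/(ps))\bigr|$, you need this $p$-term exponential sum to be $o(\alpha p)$ uniformly in $\xi\neq 0$; but a Parseval count shows its maximum over $\xi\neq0$ is at least of order $\sqrt p$ for \emph{any} choice of dilates, and $\sqrt p\gg \alpha p$ whenever $\alpha\le p^{-1/2}$ --- a regime fully allowed by the hypothesis $p\alpha\to\infty$. (Moreover, with column-dependent dilates the non-degenerate differences involve cross-correlations between $\lambda_a T$ and $\lambda_{a'}T$, re-importing the same flatness problem there.)

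A secondary error: even for $d\not\equiv 0\pmod p$, equidistribution of $T$ modulo $s$ does not suffice. The carry splits the count into pairs with $t_1-t_2\equiv c\pmod s$ whose (uniquely determined) $a_1$ lies in one range, plus pairs with $t_1-t_2\equiv c+1\pmod s$ whose $a_1$ lies in the complementary range; these are sums of $r_{T-T}$ over roughly $p-\delta$ and $\delta$ specified residues modulo $ps$, and if $r_{T-T}$ is lumpy at that finer scale the total can be as large as $2\alpha^2 N$. So the carries are not lower-order: they again demand flatness of $r_{T-T}$ in all of $\mathbb{Z}/ps\mathbb{Z}$. The paper escapes this trap by distributing the density differently: the density is carried by a union of $k\approx\alpha p$ parabolas $A_u=\{(x,x^2/u)\}$ in $\mathbb{Z}/p\mathbb{Z}\times\mathbb{Z}/p\mathbb{Z}$, whose flatness $r_{A-A}\le k^2+7k^{7/4}$ is genuinely proved via Legendre-symbol identities and Weil's theorem (Lemma~\ref{dis}, Lemma~\ref{S}, Proposition~\ref{prop 1}), while the remaining coordinate is a \emph{full interval} $\{0,\dots,s-1\}$ (Lemma~\ref{thm 2}), for which the carry analysis costs only a factor $(s+1)/s=1+o(1)$ and no auxiliary flat set is ever needed. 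To salvage your approach you would have to build $T$ by a Weil-type argument anyway, at which point you have reproduced the paper's proof one level down.
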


On the other side, there is a similar result proved in generalized Sidon sets. See \cite[Theorem 1.6]{sidon}. We state it by using our notation.
\begin{thm}
Let $\mathbb{Z}/N\mathbb{Z}$ be a cyclic group and $\alpha \in (0, 1)$. 
We define 
\[g_N(\alpha) : =  \min_{A \subseteq  \mathbb{Z}/N\mathbb{Z}, |A| \ge \alpha N} \max_{d } r_{A + A}(d).\]
Then as $N$ tends to infinity,
\[  \liminf_{N \to \infty} \frac{g_N(\alpha)}{\alpha^2N }= 1,\]
Here\[ r_{A+A}(d) = \{(a,a') \in A \times A : d = a + a'\}. \]
\end{thm}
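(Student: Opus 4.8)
The plan is to establish the two matching bounds $\liminf \ge 1$ and $\liminf \le 1$; in fact the argument gives the stronger equality $\lim_{N \to \infty} g_N(\alpha)/(\alpha^2 N) = 1$, of which the stated $\liminf$ is an immediate consequence. For the lower bound a one-line averaging argument suffices, in place of the inclusion--exclusion used in Proposition \ref{low thm}. For any $A \subseteq \mathbb{Z}/N\mathbb{Z}$ with $|A| \ge \alpha N$, every ordered pair $(a,a') \in A \times A$ contributes to exactly one value $d = a + a'$, so $\sum_{d \in \mathbb{Z}/N\mathbb{Z}} r_{A+A}(d) = |A|^2$. As $d$ ranges over the $N$ residues, the pigeonhole principle forces $\max_d r_{A+A}(d) \ge |A|^2/N \ge \alpha^2 N$. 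Minimising over admissible $A$ gives $g_N(\alpha) \ge \alpha^2 N$ for every $N$, hence $\liminf_N g_N(\alpha)/(\alpha^2 N) \ge 1$.

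For the upper bound I would produce, for every large $N$, a set of density at least $\alpha$ whose sumset is nearly flat, via a random construction. Fix $\epsilon > 0$, put $p := \alpha + N^{-1/4}$, and let $A$ retain each $x \in \mathbb{Z}/N\mathbb{Z}$ independently with probability $p$. Since $\mathbb{E}|A| = pN$ exceeds $\alpha N$ by $N^{3/4}$, far more than the standard deviation $O(\sqrt N)$, a Chernoff bound gives $|A| \ge \alpha N$ with probability tending to $1$. For a fixed residue $d$, writing $r_{A+A}(d) = \sum_x 1_A(x) 1_A(d-x)$, one finds $\mathbb{E}\, r_{A+A}(d) = (N - t_d)p^2 + t_d\, p$, where $t_d = \#\{x : 2x = d\} \le 2$ records the diagonal terms; thus $\mathbb{E}\, r_{A+A}(d) = \alpha^2 N (1 + o(1))$ uniformly in $d$. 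The crucial structural observation is that $r_{A+A}(d)$, viewed as a function of the $N$ independent indicators $(1_A(x))_x$, changes by at most $2$ when a single indicator is flipped, since each $x$ lies in the unique pair $\{x, d-x\}$. McDiarmid's bounded--differences inequality then gives $\Pr[\, r_{A+A}(d) > \mathbb{E}\, r_{A+A}(d) + \epsilon \alpha^2 N \,] \le \exp(-\epsilon^2 \alpha^4 N / 2)$.

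It remains to pass from a single $d$ to the maximum by a union bound. Summing the tail estimate over the $N$ residues gives total failure probability at most $N \exp(-\epsilon^2 \alpha^4 N/2)$, which tends to $0$ for fixed $\alpha$ and $\epsilon$. Hence with probability tending to $1$ the set $A$ simultaneously satisfies $|A| \ge \alpha N$ and $\max_d r_{A+A}(d) \le (1 + 2\epsilon)\alpha^2 N$ once $N$ is large; such an $A$ therefore exists, giving $g_N(\alpha) \le (1 + 2\epsilon)\alpha^2 N$. Letting $\epsilon \to 0$ yields $\limsup_N g_N(\alpha)/(\alpha^2 N) \le 1$, and combined with the lower bound this proves the claim.

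The step I expect to be the main obstacle is the upper bound, where one must control $\max_d$ rather than a single representation count; the entire argument turns on the bounded--differences observation, which is what makes the per-$d$ concentration exponentially small in $N$ (for $\alpha$ fixed) and thus strong enough to survive the union bound over all $N$ residues. I note finally that for fixed $\alpha$ the density restriction that would otherwise constrain the probabilistic method is vacuous, so no passage to a subsequence is needed: the $\liminf$ in the statement is merely the weakest conclusion that this construction delivers.
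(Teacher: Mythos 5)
Your proof is correct, but it takes a genuinely different route from the source. Note first that the paper does not prove this statement at all: it is quoted from Cilleruelo--Ruzsa--Vinuesa \cite[Theorem 1.6]{sidon}, and the closest in-paper analogue is the difference-set version \eqref{inf}, proved via Theorem \ref{Z}. There, and in \cite{sidon}, the upper bound comes from a \emph{deterministic} algebraic construction: unions of parabolas $A_u=\{(x,x^2/u)\}$ in $\mathbb{Z}/p\mathbb{Z}\times\mathbb{Z}/p\mathbb{Z}$ (Proposition \ref{prop 1}), with the representation counts controlled by pairing quadratic equations according to Legendre-symbol signs (Lemma \ref{dis}) and an averaging over shifts $n$ estimated by Weil's theorem (Lemma \ref{S}), then lifted to $\mathbb{Z}/p^2s\mathbb{Z}$ by the projection of Lemma \ref{thm 2}. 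Your lower bound (pigeonhole on $\sum_d r_{A+A}(d)=|A|^2$) is essentially identical to the counting argument the paper uses for \eqref{inf}, and is even cleaner in the sumset setting since no diagonal pairs need removing. Your upper bound via a random set with retention probability $p=\alpha+N^{-1/4}$, McDiarmid's inequality (the bounded-differences constant $2$ is right, since flipping $1_A(x_0)$ touches only the two terms indexed by $x_0$ and $d-x_0$), and a union bound over the $N$ residues is sound for fixed $\alpha$, and in fact yields the stronger conclusion $\lim_{N\to\infty} g_N(\alpha)/(\alpha^2N)=1$, valid for every $N$ rather than along a subsequence. What the probabilistic route gives up is uniformity in $\alpha$: your union bound requires $\epsilon^2\alpha^4 N\gg\log N$ (improvable to $\alpha^2 N\gg\log N$ by exploiting that the products $1_A(x)1_A(d-x)$ over distinct pairs $\{x,d-x\}$ are independent, so Chernoff applies directly), whereas the deterministic construction works whenever $\alpha^2 N\to\infty$ with no logarithmic loss --- precisely the shortcoming of the probabilistic method that the paper flags at the start of Section 4 as its reason for proceeding deterministically. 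For the theorem as stated, with $\alpha$ fixed, that restriction is vacuous, as you correctly observe, so your argument fully proves the claim.
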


Similarly, we have the following in the case of difference sets.
\begin{equation}\label{inf}
  \liminf_{N \to \infty} \frac{f_N(\alpha)}{\alpha^2N }= 1.
\end{equation}
\begin{proof}[Proof of \eqref{inf}]
The lower bound of $f_N(\alpha)$ can be regarded as a corollary of Theorem \ref{Lingbo 1}. We prove it by a simple counting argument here. We still use $M_N(A)$ to denote the maximum value of $r_{A-A}(x)$ for all possible non-zero $x$ in the cyclic group context. There are $|A|^2$ number of pairs $(a,a') \in A \times A$ but $|A|$ of them satisfy $a=a'$. So we have the following 
\[|A|^2- |A| \le (N-1)M_N(A).\]
By noting the fact that $|A| \ge \alpha N$, and letting $N$ tend to infinity, we have the following when $N$ is sufficiently large.
\[f_N(\alpha) \ge \alpha^2 N - 1.\]
This gives the lower bound of $f_N(\alpha)$. Combining this with Theorem \ref{Z}, we complete the proof. 
\end{proof}

To prove Theorem \ref{Z}, we first do the construction  in $\mathbb{Z}/p \mathbb{Z} \times \mathbb{Z}/p \mathbb{Z}$ where $p$ is a prime and then move on to $\mathbb{Z}/q\mathbb{Z}$ with a special type of $q$. The argument here is similar to \cite[Section 3, 4]{sidon}. Finally, the results in the cyclic group will give us the results in $\mathbb{Z}$ that we are aiming for.

\subsection{Construction in   $\mathbb{Z}/p \mathbb{Z} \times \mathbb{Z}/p \mathbb{Z}$}
We prove that there is a set $A$ such that $r_{A-A}$ has a relatively small upper bound and $|A|$ is reasonably large in  $\mathbb{Z}/p \mathbb{Z} \times \mathbb{Z}/p \mathbb{Z}$.

\begin{prop}\label{prop 1}
Given a positive integer $k$, for any odd prime number $p \ge 2k$ and any $(a,b) \not\equiv (0,0) \pmod p$, there is a set $A \subseteq \mathbb{Z}/p \mathbb{Z} \times \mathbb{Z}/p \mathbb{Z}$ with $|A| = kp - k +1$ such that $r_{A-A} (a,b) \le \lfloor k^2 + 7k^{7/4} \rfloor$.
\end{prop}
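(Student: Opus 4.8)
```latex
\textbf{Proof proposal for Proposition~\ref{prop 1}.}

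The plan is to construct $A$ explicitly as (a linear image of) a union of $k$ translates of a generic line in $\mathbb{Z}/p\mathbb{Z} \times \mathbb{Z}/p\mathbb{Z}$, chosen so that every difference is represented nearly uniformly. The model to keep in mind is that a single line $L = \{(x, cx) : x \in \mathbb{Z}/p\mathbb{Z}\}$ (for fixed slope $c$) is a perfect Sidon-type object: for any nonzero $(a,b)$ the equation $(x,cx)-(x',cx') = (a,b)$ has at most one solution, so $r_{L-L}(a,b) \le 1$. The idea is then to superimpose $k$ parallel lines $L_j = \{(x, cx + t_j) : x\}$ with distinct intercepts $t_1,\dots,t_k$; a difference $(a,b)$ with $b = ca$ can only come from pairs on the same line, while a difference with $b \neq ca$ forces one element from $L_i$ and another from $L_j$ with $i \neq j$, and for each ordered pair $(i,j)$ there is at most one such representation. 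Taking the union of $k$ such lines but \emph{identifying} the $k$ shared points on a common vertical fiber to bring the count down to $kp - k + 1$ (rather than $kp$) is what produces the stated cardinality. First I would fix the slope $c$ and intercepts, verify $|A| = kp-k+1$, and then split the count $r_{A-A}(a,b)$ according to whether $(a,b)$ is \emph{parallel} to the common slope or not.

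The heart of the matter is the bound $r_{A-A}(a,b) \le \lfloor k^2 + 7k^{7/4}\rfloor$, and the appearance of the fractional exponent $7/4$ signals that a naive union of $k$ exact lines (which would give the clean bound $k^2$) is not quite what is used. I expect the actual construction to introduce a controlled amount of \emph{randomness or perturbation} into the intercepts $t_j$, or to choose the $k$ lines as a structured-but-not-perfectly-aligned family (for instance, intercepts forming a Sidon set in $\mathbb{Z}/p\mathbb{Z}$, or chosen via a second moment / probabilistic argument). The term $7k^{7/4}$ is precisely the kind of error one gets from a concentration estimate: the expected number of representations of a generic difference is about $k^2$, and a large-deviation or variance bound controls the maximum over all $(a,b)$ by $k^2$ plus a lower-order fluctuation of size $O(k^{7/4})$. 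So the real work is a counting/probabilistic lemma showing that one can choose the intercepts so that \emph{every} nonzero $(a,b)$ simultaneously satisfies $r_{A-A}(a,b) \le k^2 + 7k^{7/4}$.

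Concretely, the key steps I would carry out are as follows. First, reduce to counting solutions of a system in $\mathbb{Z}/p\mathbb{Z}$: writing each element of $A$ in the form (point on line $L_{j}$), the number of representations of $(a,b)$ becomes the number of quadruples respecting the linear structure, which collapses to a count of pairs $(i,j)$ of line-indices together with a pair $(x,x')$ determined up to the one degree of freedom allowed by $(a,b)$. Second, show this count equals $k^2$ plus an error governed by how many pairs of intercepts $(t_i - t_j)$ fall into a prescribed residue class determined by $(a,b)$. Third, invoke the hypothesis $p \ge 2k$ to guarantee the lines do not wrap around and collide spuriously (this is why $p$ must be at least linear in $k$), so that no two distinct structural representations are accidentally identified except at the designed common fiber. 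Finally, bound the error term uniformly over all $(a,b) \not\equiv (0,0)$.

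The main obstacle will be the uniform error bound in the last step: controlling $\max_{(a,b) \neq 0}$ of the fluctuation rather than its average. If the intercepts are chosen probabilistically, a first-moment (expectation) argument gives $k^2$ on average but says nothing about the worst case, and a crude union bound over all $\sim p^2$ differences would be far too lossy once $p$ is large relative to $k$. The delicate point is therefore a second-moment or higher-moment estimate—computing the variance of $r_{A-A}(a,b)$ over the random choice of intercepts and showing it is $O(k^{7/2})$, whose square root $O(k^{7/4})$ yields the stated error after a union bound that is made affordable because the admissible range of fluctuations scales with $k$ (not $p$). Getting the constant $7$ and the exponent $7/4$ exactly right, rather than some cruder $k^{1+\varepsilon}$ saving, is where the technical care—and presumably the borrowing from \cite[Section 3,4]{sidon}—will be concentrated.
```
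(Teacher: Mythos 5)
Your construction fails at its foundation. In $\mathbb{Z}/p\mathbb{Z} \times \mathbb{Z}/p\mathbb{Z}$ a line is a coset of a subgroup, and cosets are the extreme \emph{opposite} of Sidon-type objects. For a single line $L = \{(x,cx) : x \in \mathbb{Z}/p\mathbb{Z}\}$ and any $a \neq 0$ one has $r_{L-L}(a,ca) = p$, since every $x$ yields the representation $(x,cx) - (x-a, c(x-a))$; so the claim $r_{L-L}(a,b) \le 1$ is false. Likewise, for two parallel lines $L_i = \{(x, cx+t_i)\}$ and $L_j = \{(x, cx+t_j)\}$, any difference $(a,b)$ with $b - ca = t_i - t_j$ has exactly $p$ representations, because the equations constrain only $x - x'$ and leave $x$ free; your claim that each ordered pair $(i,j)$ contributes at most one representation is therefore also false. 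A union of $k$ parallel lines has some difference represented at least $kp$ times, which dwarfs $k^2 + 7k^{7/4}$ since $p$ may be arbitrarily large relative to $k$ in the proposition. No choice of intercepts --- random, Sidon, or otherwise --- repairs this: the degeneracy comes from the linear structure itself, not from collisions among the $t_j$. (Vertical translates of a single parabola fail the same way on the differences $(0,b)$.)

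What the paper actually uses is a family of parabolas with varying dilation, $A_u = \{(x, x^2/u) : x \in \mathbb{Z}/p\mathbb{Z}\}$ for $u = n+1, \dots, n+k$ as in \eqref{Au}, all passing through $(0,0)$, which is what produces $|A| = k(p-1)+1 = kp-k+1$. A difference equation between two distinct parabolas $A_u, A_v$ reduces to a genuine quadratic congruence, hence has $1 + \left(\frac{\Delta}{p}\right) \in \{0,1,2\}$ solutions; this is the correct replacement for your Sidon-type input. The point your final paragraph gropes for but does not reach is how uniformity over $(a,b)$ is achieved: by Lemma \ref{dis}(2), two pairs $(u,v)$, $(u',v')$ with $u-v \equiv u'-v' \pmod p$ and $\left(\frac{uvu'v'}{p}\right) = -1$ have counts summing to exactly $2$, so after matching pairs of opposite Legendre sign one gets $r_{A-A}(a,b) \le k^2 + \sum_{1 \le |l| \le k-1} \bigl|\sum_{u-v=l} \left(\frac{uv}{p}\right)\bigr|$, a bound that does not depend on $(a,b)$ at all. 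Consequently no union bound over the $\sim p^2$ differences is needed --- precisely the step you correctly identified as unaffordable, but your proposed variance-plus-union-bound fix does not eliminate it. The remaining error term is handled deterministically: Lemma \ref{S} (Cauchy--Schwarz followed by Weil's bound for the degree-$4$ character sums) controls its \emph{average} over the shift $n$, so some $n^*$ achieves error at most $7k^{7/4}$; the exponent $7/4$ arises from the worst case $p \asymp k$ in Weil's bound, not from any concentration estimate over random parameters.
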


In the context $\mathbb{Z}/p \mathbb{Z} \times \mathbb{Z}/p \mathbb{Z}$, we do our construction  by pasting several disjoint sets together. The main techniques will be used are properties of quadratic equations.\\
First we define the set which plays an important role in this section.
For $ u \not\equiv 0 \pmod p $, define 
\begin{equation}\label{Au}
A_u:= \{(x, x^2/u): x \in \mathbb{Z}/p \mathbb{Z}\}.
\end{equation}

For any $(a,b) \in \mathbb{Z}/p \mathbb{Z} \times \mathbb{Z}/p \mathbb{Z} $, we use the following notation to denote the representation function. For $uv \not\equiv 0 \pmod p$, define
\[ r_{A_u - A_v}(a,b) : = \# \{ ((x, x^2/u), (y, y^2/v)) : a \equiv x - y, b \equiv x^2/u- y^2/v \pmod p     \}.\]
This is exactly the number of solutions to the equations 
\[ 
\begin{cases}
 a \equiv x - y \pmod p \\
 b \equiv \frac{x^2}{u} -  \frac{y^2}{v} \pmod p.
\end{cases}
\]

In the next lemma we study the possible values of $r_{A_u - A_v}(a,b)$, where $(a,b) \not\equiv (0,0) \pmod p$.

\begin{lem}\label{dis}
Suppose $r_{A_u - A_v}(a,b)$ is defined as above, $p$ is an odd prime and $(a,b) \not\equiv (0,0) \pmod p$.
\begin{enumerate}
\item If $u \equiv v \pmod p$, then $r_{A_u - A_v}(a,b) \le 1$.
\item If $u-v \equiv u'-v' \pmod p $ and $(\frac{uvu'v'}{p})=-1$, then $r_{A_u - A_v}(a,b) + r_{A_{u'}- A_{v'}}(a,b) =2$ for all $(a,b) \not\equiv (0,0) \pmod p$.
\end{enumerate}

The symbol $(-)$ we use in $(2)$ is Legendre symbol.
\end{lem}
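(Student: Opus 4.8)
The plan is to analyze the system
\[
\begin{cases}
 a \equiv x - y \pmod p \\
 b \equiv \dfrac{x^2}{u} - \dfrac{y^2}{v} \pmod p
\end{cases}
\]
directly, counting solutions $(x,y) \in (\mathbb{Z}/p\mathbb{Z})^2$. First I would use the linear equation to eliminate one variable, writing $x = y + a$, and substitute into the second equation. This turns the quadratic constraint into a single equation in the variable $y$. Expanding $(y+a)^2/u - y^2/v \equiv b$ yields a quadratic in $y$ of the form $\lambda y^2 + \mu y + \nu \equiv 0 \pmod p$, where $\lambda = v^{-1}(u-v)/u = (u-v)/(uv)$ (after clearing denominators one may equivalently write $\lambda \equiv (v-u)/(uv)$ up to sign conventions), $\mu$ is linear in $a$, and $\nu$ collects the constant terms involving $a^2$ and $b$.

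**Part (1): the case $u \equiv v$.** When $u \equiv v \pmod p$, the leading coefficient $\lambda$ vanishes, so the equation degenerates to a \emph{linear} equation $\mu y + \nu \equiv 0 \pmod p$. A nondegenerate linear equation over $\mathbb{Z}/p\mathbb{Z}$ has at most one solution, giving $r_{A_u-A_v}(a,b) \le 1$. The only thing to check is that $\mu \not\equiv 0$; here I would use the hypothesis $(a,b) \not\equiv (0,0)$ to rule out the degenerate subcase (if $\mu \equiv 0$ forced both $a \equiv 0$ and the constant term to vanish, one recovers $(a,b) \equiv (0,0)$, a contradiction — or else there are no solutions at all, which still respects the bound). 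This part is essentially bookkeeping.

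**Part (2): pairing via the Legendre symbol.** When $u \not\equiv v$, the equation is genuinely quadratic, so the number of solutions in $y$ is $1 + \left(\frac{\Delta}{p}\right)$ where $\Delta = \mu^2 - 4\lambda\nu$ is the discriminant (with the convention $\left(\frac{0}{p}\right)$ handled by the double-root case contributing exactly one solution). Thus $r_{A_u-A_v}(a,b) = 1 + \left(\frac{\Delta}{p}\right)$. The key computation is to show that under the hypotheses $u - v \equiv u' - v'$ and $\left(\frac{uvu'v'}{p}\right) = -1$, the two discriminants $\Delta$ and $\Delta'$ satisfy $\left(\frac{\Delta}{p}\right) = -\left(\frac{\Delta'}{p}\right)$, so that the two representation counts sum to exactly $2$. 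I expect the discriminant to factor as $\Delta \equiv c \cdot \frac{u-v}{uv}$ for some quantity $c$ depending on $(a,b)$ but \emph{independent} of which pair $(u,v)$ we use — precisely because $u-v$ is held fixed. Then $\left(\frac{\Delta}{p}\right)\left(\frac{\Delta'}{p}\right) = \left(\frac{c^2 (u-v)^2}{p}\right)\left(\frac{1}{uvu'v'}\right) = \left(\frac{uvu'v'}{p}\right) = -1$, forcing the two symbols to be opposite.

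**Main obstacle.** The hard part will be verifying that the discriminant $\Delta$ factors cleanly so that its ratio to $\left(\frac{uv}{p}\right)$ depends only on the fixed data $(a, b, u-v)$ and not separately on $u, v$. This requires a careful algebraic simplification after clearing denominators, and I would need to confirm that $c$ (hence $\Delta$) is nonzero so that the Legendre symbols are genuinely $\pm 1$ rather than forcing a degenerate double-root case; the hypothesis $(a,b) \not\equiv (0,0)$ together with $\left(\frac{uvu'v'}{p}\right) = -1$ should guarantee this, since a vanishing discriminant would make the symbol $0$ and break the product identity. Once the factorization is pinned down, the Legendre-symbol multiplicativity delivers the conclusion immediately.
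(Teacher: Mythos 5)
Your route is the same as the paper's: eliminate one variable using the linear relation, treat part (1) as a degenerate (linear) case, and in part (2) write $r_{A_u-A_v}(a,b) = 1 + \left(\frac{\Delta}{p}\right)$ and compare the two discriminants via multiplicativity of the Legendre symbol. Your structural guess about the discriminant is also correct: in the cleared-denominator form $(u-v)x^2 - 2aux + ua^2 + buv \equiv 0 \pmod p$ one gets $\Delta \equiv 4uv\left(a^2 - (u-v)b\right)$, so $\Delta$ is $4uv$ times a quantity $c = a^2-(u-v)b$ that depends only on $a$, $b$ and the fixed difference $u-v$; hence $\left(\frac{\Delta}{p}\right)\left(\frac{\Delta'}{p}\right) = \left(\frac{uvu'v'}{p}\right)\left(\frac{c^2}{p}\right)$. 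Part (1) is fine as you describe it.

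However, there is a genuine gap at the end of part (2). You assert that the hypotheses $(a,b)\not\equiv(0,0)$ and $\left(\frac{uvu'v'}{p}\right)=-1$ should guarantee $c \not\equiv 0 \pmod p$. They do not: the second hypothesis constrains only $u,v,u',v'$, and the lemma is claimed for \emph{all} nonzero $(a,b)$, including those with $b \equiv a^2(u-v)^{-1} \pmod p$ (e.g.\ $a=1$, $b\equiv (u-v)^{-1}$), for which $c \equiv 0$ and both discriminants vanish. Your supporting reasoning --- ``a vanishing discriminant would break the product identity'' --- is also not right: when $c\equiv 0$ the identity simply reads $0 = 0$, which is consistent but uninformative. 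The repair is not to exclude this case but to handle it, which is exactly what the paper does: since $u-v\equiv u'-v' \pmod p$, the quantity $c$ is the \emph{same} for both pairs, so $\Delta \equiv 0$ if and only if $\Delta' \equiv 0$; in that case each quadratic has exactly one (double) root and $r_{A_u-A_v}(a,b) + r_{A_{u'}-A_{v'}}(a,b) = 1+1 = 2$, while if $c\not\equiv 0$ your product computation forces the two symbols to be $+1$ and $-1$, again giving a sum of $2$. With that case split inserted, your argument matches the paper's proof.
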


\begin{proof}
1) In the case $u \equiv v \pmod p$, it is clear that if $a \equiv 0 \pmod p$ then there is no solution since we required that $(a,b) \not\equiv (0,0) \pmod p$. Otherwise we have $x+y \equiv uba^{-1} \pmod p$ which leads to a unique solution with the fact that $x-y \equiv a \pmod p$.\\
2) In the case that  $u-v \equiv u'-v' \pmod p$ and $(\frac{uvu'v'}{p})=-1$, we have $u \not\equiv v \pmod p$ and $u' \not\equiv v' \pmod p$. It means that we are studying proper quadratic equations in this case. Substituting $y \equiv x-a \pmod p$ into $b \equiv x^2/u- y^2/v \pmod p$, we have the quadratic equation
\[(u-v)x^2 - 2aux + ua^2 + buv \equiv 0      \pmod p          \]
and the discriminant is 
\[  \Delta \equiv 4uv (a^2- (u-v)b)                       \pmod p .    \]
The number of solutions can be written as 
\[ 
r_{A_u - A_v}(a,b) =
\begin{cases}
1 &\text{if }  (\frac{\Delta}{p}) = 0.\\
2 &\text{if }  (\frac{\Delta}{p}) = 1.\\
0 &\text{if }  (\frac{\Delta}{p}) = -1.
\end{cases}
\]

This can be simplified as 
\[r_{A_u - A_v}(a,b) = 1+ \Big(\frac{\Delta}{p}\Big).\]
To calculate the sum of $r_{A_u - A_v}(a,b)$ and $ r_{A_u'- A_v'}(a,b) $, we need to calculate the corresponding sum $(\frac{\Delta}{p}) + (\frac{\Delta'} {p})$.
Notice that the product of these two is
\[ \Big(\frac{\Delta}{p}\Big) \Big(\frac{\Delta'} {p}\Big) = \Big(\frac{16uvu'v'((u-v)b - a^2)((u'-v')b - a^2)}{p}\Big)  = - \Big(\frac{((u-v)b - a^2)^2}{p}\Big).\]
In the last step we used the fact that $u-v \equiv u'-v' \pmod p$ and $(\frac{uvu'v'}{p})=-1$. Now there are two possibilities. If $(u-v)b - a^2 \equiv 0$, then both $(\frac{\Delta}{p})$ and $(\frac{\Delta'}{p})$ equal to zero. If $(u-v)b - a^2 \not\equiv 0$, then the product of $(\frac{\Delta}{p})$ and $(\frac{\Delta'}{p})$ equals to $-1$. These two cases give us the same result
\[ \Big(\frac{\Delta}{p}\Big) + \Big(\frac{\Delta'} {p}\Big) = 0, \]
which means that $r_{A_u - A_v}(a,b) + r_{A_u'- A_v'}(a,b) = 2$.
\end{proof}

In the next lemma, the main technique we use is Weil's Theorem.
\begin{lem}\label{S}
Suppose $0 \le n \le k-1$, $1 \le |l| \le k-1$, $1 \le i,j \le k$ and $p$ is an odd prime number. Let $S_n$ define the sum
\[S_n: = \sum_{1\le |l| \le k-1}\left|\sum_{i-j=l} \Big(\frac{(n+i)(n+j)}{p}\Big)\right|.\]
Then we have the following upper bound for the sum of $S_n$
\[       \sum_{0 \le n \le p-1} S_n \le  \sqrt{2p^2k^2 (2k-1)+ 8p^{3/2}k^4}.  \]
\end{lem}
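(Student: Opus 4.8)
The plan is to bound $\sum_{0 \le n \le p-1} S_n$ via Cauchy--Schwarz followed by a second-moment estimate on the inner character sum, the latter being controlled by Weil's bound. First I would apply Cauchy--Schwarz to the outer sum over $n$: since each $S_n$ is itself a sum of at most $2(k-1)$ absolute values, I would write
\[ \sum_{0 \le n \le p-1} S_n \le \sqrt{2(k-1)p} \left( \sum_{0\le n \le p-1} \sum_{1 \le |l| \le k-1} \Big| \sum_{i-j=l} \Big(\tfrac{(n+i)(n+j)}{p}\Big) \Big|^2 \right)^{1/2}, \]
reducing everything to estimating the mean square of $T_{n,l}:=\sum_{i-j=l}\big(\tfrac{(n+i)(n+j)}{p}\big)$. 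Note the factor $2(k-1) < 2k-1$ roughly matches the shape $2p^2 k^2(2k-1)$ appearing under the radical, so this is the right first move.

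Next I would expand the square $|T_{n,l}|^2$ and sum over $n$. Writing $T_{n,l}$ as a sum over pairs $(i,j)$ with $i-j=l$, the square becomes a double sum over two such pairs, and summing over $n$ produces character sums of the form $\sum_{n}\big(\tfrac{(n+i)(n+j)(n+i')(n+j')}{p}\big)$. This is a Legendre symbol of a quartic (or lower-degree) polynomial in $n$. The key dichotomy is whether the four shifts $i,j,i',j'$ are all distinct modulo $p$ or collapse into repeated roots. When all roots are distinct the polynomial is not a perfect square, so Weil's theorem gives a cancellation bound of size $O(\sqrt{p})$ (with an implied constant like $3$); when roots coincide the summand reduces to a lower-degree symbol or to $1$, contributing a diagonal term of size at most $p$. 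I would separate the diagonal contribution (bounded by roughly $p$ times the number of degenerate configurations, giving the $2p^2 k^2(2k-1)$ term after accounting for the count of pairs) from the off-diagonal Weil contribution (bounded by $O(p^{1/2})$ times the number of quadruples, giving the $8p^{3/2}k^4$ term).

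Assembling these, the mean-square sum $\sum_n \sum_l |T_{n,l}|^2$ is bounded by something like $p \cdot k^2 (2k-1) + C p^{1/2} k^4$, and feeding this back through the Cauchy--Schwarz inequality yields $\sum_n S_n \le \sqrt{2p^2 k^2(2k-1) + 8 p^{3/2} k^4}$ once the constants are tracked carefully. The hard part will be the bookkeeping in the off-diagonal case: I must count exactly how many quadruples $(i,j,i',j')$ with $i-j=l$, $i'-j'=l'$ fall into each degeneracy class, and verify that Weil's bound applies with a constant clean enough to produce the stated coefficients $2$ and $8$. In particular I need to confirm that when the four points are distinct the quartic $\prod(n+\cdot)$ genuinely fails to be a square mod $p$ (so Weil gives $(\deg-1)\sqrt{p} = 3\sqrt{p}$), and to handle the boundary cases where $|l|$ or $|l'|$ forces partial coincidences; these degenerate strata are exactly what separates the main term from the error term, so getting their counts right is where the constants live.
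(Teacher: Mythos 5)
Your proposal follows essentially the same route as the paper's proof: Cauchy--Schwarz over the pairs $(n,l)$ with factor $\approx 2kp$, expansion of the square into quadruple sums $\sum_n \bigl(\tfrac{(n+i)(n+j)(n+i')(n+j')}{p}\bigr)$, a trivial bound of $p$ on the $O(k^2)$ degenerate quadruples, and Weil's bound times the $\le k^3$ remaining quadruples. The only cosmetic differences are that the paper's dichotomy is ``$f$ is (not) a constant multiple of a square'' rather than ``roots (not) all distinct,'' and the paper uses the cruder Weil constant $\deg f = 4$ where you use $\deg f - 1 = 3$; both variants yield the stated bound.
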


To estimate the sum, we first  use the Cauchy inequality to rewrite the sum which roughly gives us a function with  power $4$ in the numerator of Legendre symbol, and then apply the Weil's Theorem.
\begin{proof}
First we use Cauchy inequality to get the following estimation.
\[ \sum_{0 \le n \le p-1} S_n = \sum_{n,l} \left|\sum_{i-j=l}\Big(\frac{(n+i)(n+j)}{p}\Big)\right|  \le \sqrt{ 2kp \sum_{n,l} \Big( \sum_{i-j=l}\Big( \frac{(n+i)(n+j)}{p} \Big) \Big)^2 }. \]
The right hand side above can be written as 
\[ \sqrt{ 2kp \sum_{i-j = i' -j'}  \sum_{n}\Big( \frac{(n+i)(n+j)(n+i')(n+j')}{p} \Big)  }.\]
Let $f(n)=(n+i)(n+j)(n+i')(n+j')$. We estimate the quantity above by using Weil's Theorem.\\
1) If $f(n)$ is a constant multiple of a square, then we bound the value $\Big(\frac{f(n)}{p}\Big)$ trivially by $1$. The number of quadruples $(i,j,i',j')$ that can make $f(n)$ a constant multiple of a square is bounded by $k(k-1)+k^2$. The reasons are the following. First notice that if the restriction on the quadruple is $i+j'=i'+j$, then we have two possible cases, either $i=i'$ and $j=
j'$, or $i=j$ and $i'=j'$.
These two cases both give $k^2$ choices but there is an overlap that all of them are equal, i.e., we need to subtract $k$ choices. In total, we have the following bound
\[\sum_{f~ is ~square} \sum_{n} \Big( \frac{f(n)}{p}\Big) \le pk(2k-1).                                   \]
2) If $f(n)$ is not a constant multiple of a square, we can use Weil's Theorem. For each quadruple $(i.j,i',j')$, we have the following, 
\[ \sum_{0 \le n \le p-1} \Big( \frac{f(n)}{p}\Big) \le (deg f) \sqrt{p}= 4 \sqrt{p}. \]
The total number of quadruples under the restriction $i-j=i'-j'$ is bounded by $k^3$. We also use this bound as our bound for the number of quadruples which make $f$ satisfy Weil's Theorem. Then we have the following

\[\sum_{f~isn't~a~square} \sum_{n} \Big( \frac{f(n)}{p}\Big) \le 4k^3 \sqrt{p}.                              \]
Combining case $1)$ and case $2)$,  we have 
\[       \sum_{0 \le n \le k-1} S_n \le  \sqrt{2pk ( p k(2k-1) + 4k^3 \sqrt{p})} =   \sqrt{2p^2k^2 (2k-1)+ 8p^{3/2}k^4}. \]
\end{proof}

Now we will use the above two lemmas to prove Proposition~\ref{prop 1}.

\begin{proof}[Proof of Proposition \ref{prop 1}]
First we point out how the set $A$ looks. Let $A_u$ be the same as what we defined in \eqref{Au}, and $A$ be  defined as follows:
\[ A : = \bigcup_{n+1 \le u \le n +k}  A_u.\]
The crucial point is to find a suitable $n$ which makes $A$ be the set we are searching for.\\ 
First notice that the size of $A$ is independent of $n$. And the size of $A$  is $k(p-1)+1$ since $A_u$ and $A_v$ only intersect at point $(0,0)$ for any $u \neq  v$.
Next we study the representation function $r_{A-A}$. From the definition of $A$, we can have a trivial upper bound. For any $(a,b)$ non-zero, we have
\[ r_{A-A}(a,b) \le \sum_{n+1 \le u,v \le n+k} r_{A_u - A_v} (a, b).\]
To study the right hand side above, we first parametrize the variables above. Write $u = n + i$, $v= n +j$, then $ i,j \in \{1,2,3, \cdots,k\}$. Let $l = i- j = u - v$, we have the following,
\[ |l| \le k -1.      \]  
Fixing the value of $l$, we have $k - |l|$ pairs of $(i, j)$, i.e., $k - |l|$ pairs of $(u,v)$. Now we are going to use Lemma \ref{dis}.\\
1) If $l = 0$, from the first part of the lemma, we have the bound 
\[\sum_{u=v}r_{A_u-A_v}(a,b) \le k .\]
2) If $|l| \ge 1$, we consider the value of $\Big( \frac{uv}{p}  \Big)$ for each pair, $(u,v)$. Let $\alpha(l) : = \# \{(u,v): u- v = l,\Big( \frac{uv}{p}  \Big) = 1 \}$, 
$\beta(l) : = \# \{(u,v): u- v = l,\Big( \frac{uv}{p}  \Big) = -1 \}$. Then we have the following fact
\[ \alpha(l) + \beta(l) = k - |l|,~~\alpha(l)- \beta(l) = \sum_{u-v= l} \Big( \frac{uv}{p}\Big).  \]
Our strategy is to match pairs into pairs according to their values under Legendre symbol. Precisely, if two pairs have different signs, we can match them and then use the second part in Lemma \ref{dis} to estimate the sum. Since the values of $\alpha(l)$ and $\beta(l)$ are not necessarily equal, for the rest of the pairs which have not been  matched with other pairs, we trivially bound their function values by $2$. Hence, we have 
\[\sum_{1 \le| l| \le k} r_{A_u - A_v}(a,b) \le 2 \min \{\alpha(l), \beta(l)\} + 2 (\max \{\alpha(l)  ,\beta(l)\} - \min \{\alpha(l), \beta(l)\}). \]
The above actually gives us the result $ 2 \max \{\alpha(l)  , \beta(l)\}$, i.e., $\alpha(l)+ \beta(l) + |\alpha(l) - \beta(l) |$,  which can be further written as 
\[ k - |l| +  \left|\sum_{u-v= l} \Big( \frac{uv}{p}\Big) \right|.\]
Summing the above expression over $l$ where $1 \le |l| \le k-1$ and considering the obvious bound that we have showed for $l =0$ in case $1)$, we have the following
\[\sum_{| l| \le k-1} r_{A_u - A_v}(a,b) = k +\sum_{1 \le |l| \le k-1} r_{A_u - A_v}(a,b) = k^2 + \sum_{1 \le| l| \le k-1} \left|\sum_{u-v= l} \Big( \frac{uv}{p}\Big) \right|.\]
Notice that the second part on the right hand side above is exactly what we defined in Lemma \ref{S}, i.e., the upper bound now is 
\begin{equation}\label{express}
k ^2 + S_n.
\end{equation}
Since $uv \not \equiv 0 \pmod p$, we have the following upper bound for $n$, 
\[ n+ k \le p-1 \iff n \le p- k - 1.\]
We define $n^*$ to be the smallest integer index among $\{0,1,2,\cdots,p - k - 1\}$ such that
\[S_{n^*} = \min \{S_n: 0\le n \le p- k-1 \}. \]
Then we can see that $n^*$ is uniquely defined. By using Lemma \ref{S}, we have
\[   S_{n^*} \le  \sum_{ 0 \le n \le p- k- 1} \frac{S_n}{  p-k }  \le    \sum_{ 0 \le n \le p- 1} \frac{S_n}{ p- k}     =     \frac{ \sqrt{2p^2k^2 (2k-1)+ 8p^{3/2}k^4} }{p-k}.\]
This bound is smaller than
\[\frac{ 2\sqrt{p^2k^3 + 2p^{3/2}k^4} }{p-k}.    \]
Write $p = \lambda k$, where $\lambda \ge 2$, then the above expression is a decreasing function with respect to $\lambda$. So we can further get a bound
\begin{equation}\label{S2}
 S_{n^*} < 7k^{7/4}.
\end{equation}
By substituting \eqref{S2} into \eqref{express}, we get the upper bound 
\[k^2 + 7 k ^{7/4}.\]  
Hence, we have found a  set $A$ with $kp-k+1$ elements and $r_{A-A}$ is upper bounded by $\lfloor k^2 + 7 k^{7/4} \rfloor$. The set $A$ can be represented as following 
\[A =\bigcup_{n^*+1 \le u \le n^* +k}  A_u.\]
\end{proof}

\subsection{Construction in   $\mathbb{Z}/q \mathbb{Z}$  with $ q = p^2s$}

We construct a large set $A'$ in a certain cyclic group. The strategy is to project the set $A$ constructed in $\mathbb{Z}/p \mathbb{Z} \times \mathbb{Z}/p \mathbb{Z}$ onto $\mathbb{Z}/q \mathbb{Z}$.

\begin{lem}\label{thm 2}
Suppose that $A$ is a subset of $\mathbb{Z}/p\mathbb{Z} \times \mathbb{Z}/p\mathbb{Z}$  with size $m$, such that $r_{A-A}$ is upper bounded by $h$. Then there exists a subset $A'$ in $\mathbb{Z}/p^2s \mathbb{Z}$ with size $|A'| = ms$ such that $r_{A-A}$ is upper bounded by $ h(s+1)$.
\end{lem}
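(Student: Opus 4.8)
The plan is to realize $A'$ as a lift of $A$ under a \emph{base-$p$} digit map and to read the multiplicities $r_{A'-A'}$ off from those of $r_{A-A}$. Concretely, I would identify $\mathbb{Z}/p^2 s\mathbb{Z}$ with the digit set $\{x + py + p^2 t : 0\le x,y<p,\ 0\le t<s\}$ and take the $s$-fold lift $A' = \{x + py + p^2 t : (x,y)\in A,\ 0\le t<s\}$, so that $|A'| = ms$ automatically. The virtue of this choice is that the two reductions $\mathbb{Z}/p^2 s\mathbb{Z}\to\mathbb{Z}/p\mathbb{Z}$ and $\mathbb{Z}/p^2 s\mathbb{Z}\to\mathbb{Z}/p^2\mathbb{Z}$ are homomorphisms that recover the two coordinates of a difference in $\mathbb{Z}/p\mathbb{Z}\times\mathbb{Z}/p\mathbb{Z}$ up to a single bounded ambiguity, so that one nonzero difference $d$ upstairs corresponds to only a few difference classes downstairs.

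The core computation is to bound $r_{A'-A'}(d)$ for fixed nonzero $d$. Reducing $d$ modulo $p$ pins down the first-coordinate difference $a := (x_1-x_2)\bmod p$. Reducing modulo $p^2$ then determines the second-coordinate difference up to the single \emph{borrow} coming from whether $x_1\ge x_2$ or $x_1<x_2$, so $b := (y_1-y_2)\bmod p$ is forced into a set of at most two values $\{b_0,\,b_0+1\}$. Each admissible class $(a,b)$ contributes at most $r_{A-A}(a,b)\le h$ genuine pairs from $A$, and the residual freedom is in the layer indices $(t_1,t_2)$, whose count supplies the factor governing the $s$ copies. The delicate point here is the precise bookkeeping of these three ingredients — two coordinate classes, the layer count, and the carry — since done crudely they multiply to something of order $hs$ with a \emph{loss in the constant}; pinning the constant down to give exactly $h(s+1)$ (rather than, say, $2hs$) is already nontrivial and is where the carry must be charged against the layer count carefully instead of independently.

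The genuine obstacle, and the reason the naive lift above cannot be used verbatim, is the \emph{vertical} direction. Differences $d\equiv 0\pmod{p^2}$ correspond to the degenerate class $(a,b)=(0,0)$, for which $r_{A-A}(0,0)=m$ rather than $\le h$; the $s$ translates of each point then pile up along this direction and $r_{A'-A'}(d)$ can be as large as $\approx ms$, wrecking the bound. This is not an artifact of one formula: whenever the $s$ copies of a point lie in a single coset of the order-$s$ subgroup (equivalently, form an arithmetic progression in the layer direction), that common difference is shared by \emph{all} $m$ points and blows up. Overcoming this is the heart of the matter: one must place the $s$ lifts \emph{transversally}, so that the layer differences project to \emph{nonzero} elements of $\mathbb{Z}/p\mathbb{Z}\times\mathbb{Z}/p\mathbb{Z}$ and are therefore again controlled by $r_{A-A}\le h$, while simultaneously keeping $|A'|=ms$ exactly and not aggravating the horizontal carry. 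I expect this transverse choice of lift — trading the trivial vertical subgroup structure for honest $\mathbb{Z}/p\mathbb{Z}\times\mathbb{Z}/p\mathbb{Z}$-differences — to be the technical crux, and it is precisely this that should replace the spurious $ms$ by the correct $h(s+1)$.
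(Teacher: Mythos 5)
You correctly anticipate the paper's construction and its bookkeeping: up to permuting digits, your naive lift \emph{is} the paper's lift. The paper takes $A' = \{a + cp + bsp : (a,b)\in A,\ 0\le c\le s-1\}$ (layer digit in the middle rather than on top), and its proof is exactly the carry analysis you describe: a borrow $\delta\in\{0,1\}$ in the low digit, a borrow $\eta\in\{0,1\}$ in the layer digit, and, for each fixed $\eta$, at most $h$ choices of the point pair times at most $s-c-\delta$ (for $\eta=0$) resp.\ $c+\delta$ (for $\eta=1$) choices of the layer pair, summing to $h(s+1)$ — precisely the ``charge the carry against the layer count'' step you flag as delicate. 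However, your proposal is not a proof: the step you yourself declare to be the crux, replacing this lift by a ``transverse'' one so that layer differences land on nonzero classes of $\mathbb{Z}/p\mathbb{Z}\times\mathbb{Z}/p\mathbb{Z}$, is never constructed; you give no candidate map, no verification that it preserves $|A'|=ms$, and no argument that it controls the carries. As written, your argument terminates at an obstruction rather than at the bound.

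That said, the obstruction you identify is genuine, and the paper's own proof does not overcome it — it overlooks it. In the paper's $A'$, the $s$ lifts of each point $(a,b)$ form the arithmetic progression $\{a+bsp+cp : 0\le c< s\}$ with common difference $p$, so $r_{A'-A'}(p)\ge m(s-1)$. The proof's assertion that ``once we have fixed $\eta$, the number of choices for $a_1,a_2,b_1,b_2$ is no more than $h$'' fails exactly when the induced class satisfies $(a,b+\eta)\equiv(0,0)\pmod p$ (for instance $x=cp$ with $\eta=0$): there the number of point pairs is $r_{A-A}(0,0)=m$, which the hypothesis does not control, since Proposition \ref{prop 1} bounds $r_{A-A}$ only at nonzero classes. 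In the regime where Lemma \ref{thm 2} is applied ($m=kp-k+1$, $h=\lfloor k^2+7k^{7/4}\rfloor$, $p\ge 2k$, and $k,s\to\infty$), one has $m(s-1)\ge(2+o(1))k^2s$ while $h(s+1)=(1+o(1))k^2s$, so the constructed set genuinely violates the asserted bound at the difference $d=p$; the defect propagates to Proposition \ref{zan} and to Theorems \ref{Z} and \ref{Lingbo 2}, where the bad difference has multiplicity about $\alpha N$ rather than the claimed $(1+o(1))\alpha^2N$. So your diagnosis is correct and in fact sharper than the paper's treatment, but neither you nor the paper supplies the repair — a lift in which the $s$ copies of the points do not all share a common difference — and that is the actual remaining mathematical work for this lemma.
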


\begin{proof}
Notice that any element in $A$ has two coordinates and we choose the natural way to write them as integers in $[0, p-1]$. Now we use set $A$ to define set $A'$ as following
\[A' = \{ x= a+ cp + bsp : (a,b) \in A, 0\le c \le s-1 \}.\]
From the definition, we have $|A'| = |A| \times s = ms$. We show that $r_{A'-A'}$ is bounded by $h(s+1)$. In other words, we need to study the representation function $r_{A-A}(x)$ for non-zero $x \in \mathbb{Z}/p^2s \mathbb{Z}$ . Given $x = a+ cp + bsp$, then $a,b,c$ are fixed. Write $x$ as the difference of two elements in $A'$,
\begin{equation}\label{1}
   a+ cp+ bsp \equiv (a_1 +c_1p + b_1sp) - (a_2 + c_2 p + b_2 sp)  \pmod {p^2s}.     
\end{equation}
From the above we can get 
\[ a \equiv a_1- a_2   \pmod p. \]
There are two possible values of the difference $a_1 - a_2$. Either $ a_1 - a_2 = a$ or $a_1 - a_2 =a -p$. For convenience, we write  $a_1- a_2 = a - \delta p$ where $\delta = 0,1$. This also tells us that $\delta = (a+a_2-a_1)/p$ is fixed once $a_1, a_2$ are given.\\
Now we substitute this expression into \eqref{1}. We can derive that 
\begin{equation}\label{2}
c+bs \equiv - \delta + c_1- c_2 + (b_1 -b_2)s \pmod {ps}.
\end{equation}
Again, we have the following from \eqref{2}
\begin{equation}
c \equiv - \delta + c_1 - c_2 \pmod s.
\end{equation}
 Thus we have $c = - \delta + c_1 - c_2 $ or $c = - \delta + c_1 - c_2 + s$. We write this as: 
\begin{equation}\label{3}
c -\eta s =  - \delta + c_1 - c_2 ~ where ~ \eta = 0,1.
\end{equation}
We substitute this into \eqref{2} and we get  
\[ b \equiv - \eta + b_1 - b_2 \pmod p. \]
Now we have the expression of the element $(a,b) $ in $A$, 
\[(a,b)= (a_1, b_1)- (a_2, b_2) + (\delta, -\eta).\]
We also have the following form 
\[ (a,b) \equiv (a_1, b_1)- (a_2, b_2) + (0, -\eta) \pmod p.\]
We have that $r_{A-A}$ is upper bounded by $h$. Once we have fixed $ \eta$, the number of choices for $a_1,a_2,b_1,b_2$ is no more than $h$, and $\delta $ is also determined.\\
So now we only need to consider the number of choices for $c_1,c_2$ once $a_1,a_2,b_1,b_2, \delta$ are fixed. We come back to Equation \eqref{3} and discuss the possible cases in terms of the values of $\eta$.\\
1) If $\eta = 0$, we have $c_1 = c_2 + c +\delta \ge c$, which leads to at most $s-c$ possibilities.\\
2) If $\eta = 1$, we have $c_1 = c - s +\delta + c_2 \le c -s +1 +s-1  = c$ , which leads to at most $c+1$ possibilities.\\
In total, the above will give us at most $h(s-c+c+1)= h(s+1)$ possibilities.
\end{proof}

\subsection{Conclusion for cyclic group}

First we combine Proposition \ref{prop 1} and Lemma \ref{thm 2} to get the following.

\begin{prop}\label{zan}
For any positive integers $k, s$, and any positive odd prime integer $p \ge 2k $, there is a set $A \subseteq \mathbb{Z}/p^2s\mathbb{Z}$ with size $(kp-k+1)s$ such that  $r_{A-A}$ is upper bounded by $\lfloor k^2+7 k^{7/4} \rfloor (s+1)$.
\end{prop}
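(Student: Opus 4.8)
The plan is to obtain Proposition~\ref{zan} by directly composing the two results already established in this subsection, namely Proposition~\ref{prop 1} (the construction in $\mathbb{Z}/p\mathbb{Z}\times\mathbb{Z}/p\mathbb{Z}$) and Lemma~\ref{thm 2} (the projection onto $\mathbb{Z}/p^2s\mathbb{Z}$). There is essentially no new analytic content here; the proof is a bookkeeping step that feeds the output of the first result into the hypothesis of the second, so I would keep it short.

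First I would invoke Proposition~\ref{prop 1} with the given $k$ and $p\ge 2k$ to produce a set $A\subseteq \mathbb{Z}/p\mathbb{Z}\times\mathbb{Z}/p\mathbb{Z}$ of size $kp-k+1$. The one point that deserves care is that Lemma~\ref{thm 2} requires $r_{A-A}$ to be bounded by a single constant $h$ for all non-zero arguments simultaneously, whereas Proposition~\ref{prop 1} is phrased pointwise in $(a,b)$. However, the set constructed there, $A=\bigcup_{n^*+1\le u\le n^*+k}A_u$, does not depend on $(a,b)$: the index $n^*$ minimizes the quantity $S_n$ of Lemma~\ref{S}, which is itself independent of $(a,b)$, and the bound $k^2+S_{n^*}$ derived in that proof dominates $r_{A-A}(a,b)$ for every non-zero $(a,b)$. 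Hence the same set $A$ satisfies $\max_{(a,b)\not\equiv(0,0)}r_{A-A}(a,b)\le \lfloor k^2+7k^{7/4}\rfloor$, so I may legitimately take $m=kp-k+1$ and $h=\lfloor k^2+7k^{7/4}\rfloor$ when I pass to Lemma~\ref{thm 2}.

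Applying Lemma~\ref{thm 2} with these values of $m$ and $h$ then yields a set $A'\subseteq\mathbb{Z}/p^2s\mathbb{Z}$ with $|A'|=ms=(kp-k+1)s$ and with $r_{A'-A'}$ bounded by $h(s+1)=\lfloor k^2+7k^{7/4}\rfloor(s+1)$, which is exactly the assertion of Proposition~\ref{zan}. The main (and essentially only) obstacle is the uniformity observation above: one must confirm that the construction in Proposition~\ref{prop 1} produces a single set working uniformly over all non-zero difference vectors, rather than a different set for each $(a,b)$, so that the single-constant hypothesis of Lemma~\ref{thm 2} is genuinely met. Once that is noted, the conclusion follows immediately by substitution.
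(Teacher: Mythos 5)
Your proposal is correct and takes exactly the paper's route: the paper obtains Proposition~\ref{zan} precisely by feeding the set from Proposition~\ref{prop 1} (with $m = kp-k+1$, $h = \lfloor k^2+7k^{7/4}\rfloor$) into Lemma~\ref{thm 2}, which is what you do. Your observation that the set constructed in Proposition~\ref{prop 1} is independent of $(a,b)$ --- so the pointwise bound there really is a uniform bound, as Lemma~\ref{thm 2} requires --- is a sensible clarification of that proposition's quantifier ordering, not a departure from the paper's argument.
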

Now we can prove Theorem \ref{Z}.
\begin{proof}[Proof of Theorem \ref{Z}]

The lower bound is proved in the proof of \eqref{inf}. Now we prove the upper bound. 
For any positive $\alpha <1/ 2$, choose $k$ to be the integer such that $k = \lceil p \alpha \rceil+1$. Since $p \alpha$ tends to infinity, 
\begin{equation}\label{kp}
k/p = (1+o(1))\alpha.
\end{equation}
From Proposition \ref{zan}, we know there exists a subset $A \subseteq \mathbb{Z}/N \mathbb{Z}$ with size  $(kp-k+1)s$ such that the representation function $r_{A-A}$ is upper bounded by $\lfloor k^2+7 k^{7/4} \rfloor (s+1)$. First we consider the density of $A$,
\begin{equation} \label{d}
\frac{(kp- k +1)s}{|N|} \ge \frac{kp-k+1}{p^2}\ge \frac{k-1}{p}\ge \alpha.
\end{equation}
Then we consider the expression of  the upper bound for $r_{A-A}$,
\[\lfloor k^2+7 k^{7/4} \rfloor (s+1) = (1+o(1))(k^2/p^2)N = (1+o(1)) \alpha^2 N.\]
The expression is valid since we let $s$ and $k$ tend to infinity and Equation \eqref{kp} holds. So the above has showed that for a special type of integers, $N$, $f_N(\alpha)$ is upper bounded by $(1+o(1)) \alpha^2 N$. This completes the proof.

\end{proof}

Now, we can easily prove Theorem \ref{Lingbo 2}
\begin{proof}[Proof of Theorem \ref{Lingbo 2}]

For any $N$ in the form $N= p^2s$ for some suitable prime $p$ and integer $s$, let  set $A \subseteq \mathbb{Z}/N\mathbb{Z}$ be the set in Theorem \ref{Z}. We can also view it as a subset of $[N]$ in the natural way. 
To distinguish, we use $r'_{A-A}$ to denote the representation function in $\mathbb{Z}/N\mathbb{Z}$. Then a natural inequality is 
\[ r_{A-A}(x) \le r'_{A-A}(x) \le (1+o(1)) \alpha^2 N. \] 
For $N$ not in the form $N = p^2 s$, we choose $p =o(\sqrt{N})$ to be a prime number in the interval $[P,2P]$ for some integer $P$. This is valid once $P$ is sufficiently large. Also, we require that $p \alpha$ tends to infinity. This is valid by our assumption that $N\alpha^2$ tends to infinity as $N$ tends to infinity. Choose $s=\lfloor \frac{N}{p^2}\rfloor$ and notice that $k = O (p \alpha)$, then both $k$ and $s$ tend to infinity as $N$ tends to infinity.  
We have $p^2 s < N <p^2 (s+1)$ from the choice of $s$. From the conclusion for $p^2 s$ in Theorem \ref{Z}, we know for any $\alpha <1/2$, there exists a set $A$ such that $|A| \ge \alpha p^2s$ and $r_{A-A} \le  (1+o(1)) \alpha^2 p^2 s$. Regarding $A$ as a subset of $[N]$ in the natural way, we have $|A| \ge \alpha \times \frac{s}{s+1} N$ and $r_{A-A} \le (1+o(1)) \alpha^2 N$. Write $\alpha' = \alpha \times \frac{s}{s+1}$, then the result can be restated as 
\[ |A| \ge \alpha' N,  ~~r_{A-A} \le (1+o(1)) \left(\frac{s+1}{s} \right)^2  \alpha'^2 N. \]
The right hand side can be written as $(1+o(1)) \alpha'^2 N$ once we let $s$ tend to infinity.
The above fact  gives us the upper bound  in Theorem \ref{Lingbo 2}. Hence, we complete the proof.\\
\end{proof}

\bibliographystyle{plain}
\bibliography{xu}

\end{document}